\documentclass[10pt]{amsart}
 
\usepackage{a4wide}
\usepackage{nicefrac}
\usepackage{amsmath}
\usepackage{amssymb}
\usepackage{amsthm}
\usepackage{enumerate}
\usepackage{graphicx}
\usepackage{float}
\usepackage{wasysym}
\usepackage{fancyhdr}
\usepackage[toc,page]{appendix}
\usepackage{todonotes}
\usepackage{url}

\newtheorem{Lem}{Lemma}
\newtheorem{Thm}{Theorem}
\newtheorem{Prop}{Proposition}
 
\theoremstyle{definition}
\newtheorem{Rem}{Remark}
\newtheorem{alg}{Algorithm}
\newtheorem{ass}{Assumption}
\newtheorem*{Cond*}{Model Reduction Condition}

\newcommand{\R}{\mathbb{R}}
\renewcommand{\S}{\mathcal{S}}
\newcommand{\C}{\mathcal{C}}
 
\begin{document}
 
\title{Elastic Image Registration with Exact Mass Preservation}
 
 \author[Wlazlo]{Jaros\l{}aw Wlaz\l{}o}
 \address[Jaros\l{}aw Wlaz\l{}o]{\newline Transport Processes, Fraunhofer ITWM, Fraunhofer-Platz 1 Kaiserslautern, 67663, Germany}
 \email{jaroslaw.wlazlo@itwm.fraunhofer.de}
 
 \author[Fe\ss{}ler]{Robert Fe\ss{}ler}
 \address[Robert Fe\ss{}ler]{\newline Transport Processes, Fraunhofer ITWM, Fraunhofer-Platz 1 Kaiserslautern, 67663, Germany}
 \email{robert.fessler@itwm.fraunhofer.de} 
 
 \author[Pinnau]{Ren\'e Pinnau}
  \address[Ren\'e Pinnau]{\newline Technische Universit\"at Kaiserslautern, Department of Mathematics, Erwin-Schr\"odinger-Stra{\ss}e, 67663 Kaiserslautern, Germany }
  \email{pinnau@mathematik.uni-kl.de}
 
 \author[Siedow]{Norbert Siedow}
 \address[Norbert Siedow]{\newline Transport Processes, Fraunhofer ITWM, Fraunhofer-Platz 1 Kaiserslautern, 67663, Germany}
 \email{norbert.siedow@itwm.fraunhofer.de}
 
 \author[Tse]{Oliver Tse}
 \address[Oliver Tse]{\newline Technische Universit\"at Kaiserslautern, Department of Mathematics, Erwin-Schr\"odinger-Stra{\ss}e, 67663 Kaiserslautern, Germany }
 \email{tse@mathematik.uni-kl.de}
 
\begin{abstract}
We establish a new framework for image registration, which is based on linear elasticity and optimal mass transportation theory. We combine these two arguments in order to obtain a PDE constrained optimization problem that is analytically investigated and further discretized with the finite difference method and solved by an inexact SQP algorithm. This requires to solve in each step a large sparse linear system, which has a saddle point form. Motivated by stability arguments we use a fully staggered grid for the discretization of the displacement vector field. Artificial and real world examples are presented to underline the numerical robustness of the method.
\end{abstract}

\maketitle

{\bf Keywords.} Image registration; optimal transportation; mass preservation; linear elasticity.

{\bf AMS Classification.} 92C55, 49J20, 49K20, 65M55, 65K10

\section{Introduction} \label{sec:introduction}
Image registration is one of the most challenging problems in image processing. Especially in medicine, which is the main direction of our interest, this process has an extremely high significance. It allows for the incorporation of different image information to facilitate and improve surgery planning or therapy procedure. Recently, as a consequence of the advanced development of imaging techniques it has become possible to perform a minimal invasive operation using real live imaging. However, in order to make the image information usable for surgeons image registration becomes essential. Moreover, an important issue is also the quality of the registration process since its results may dictate the success of a surgery.

The image registration problem may be expressed as follows. Let $R$ and $T$ be any two given images represented by their intensity functions on a domain $\Omega\subset\R^d$, where $d = 2$ corresponds to plane images and $d = 3$ to 3D images. In the following we will call $R$ the {\em reference} and $T$ the {\em template} or {\em moving} image. We wish to find a common geometric reference frame between these two images, i.e., the task is to find a transformation $\phi:\Omega\rightarrow\Omega$ such that the transformed template image $T_{\phi} := T\circ\phi$, resembles the reference image as much as possible. Naturally, this resembling can be understood in a range of different ways, depending on what kind of similarity measure we take into account. A precise mathematical formulation and physical reasoning of our choice is described in next section.

One can observe an increasing number of publications in the field of image registration over the past decades. Various models have been established and many numerical methods have been implemented \cite{MA}. The presented approaches differ from rigid to non-rigid \cite{CC2009,T}, landmarks based to intensity based \cite{VBS96,FM3}, or parametric to non-parametric ones \cite{KU2003,NPIRGEN}. The method of choice strongly depends on the specific properties of the problem. It is rather impossible to indicate an exceptional procedure which succeeds in every case. Each of them has its own advantages and drawbacks. 

A most common treatment of the task at hand is based on the following variational formulation. Find a transformation $\phi$ minimizing the total energy
\begin{equation} 	\label{op:irp}
  \mathcal{J}_{\alpha}[\phi] := \mathcal{D}(R,T_{\phi}) + \alpha \S(\phi-\phi_\text{ref}),
\end{equation}
where $\mathcal{D}$ corresponds to a dissimilarity measure quantifying the difference between the transformed template, $T_{\phi}$, and the reference, $R$. In this work we are mostly interested in mass preserving transformations \cite{LRmt}. In this case $\mathcal{D}$ takes the form
\begin{equation}  \label{op:mp}
  \mathcal{D}(R,T_\phi) := \int_{\Omega}\left(T_{\phi}\left(x\right)\det\left(\nabla\phi\left(x\right)\right)-R\left(x\right)\right)^2dx.
\end{equation}
In (\ref{op:irp}), $\S$ is a regulariying term which eliminates unwanted transformations, $\phi_\text{ref}$ is a favoured transformation (typically set to identity) and $\alpha>0$ is a regularization parameter that controls the influence of $\S$. 

In \cite{JM} (cf.~\cite{SS}) the authors showed that image registration based solely on $\mathcal{D}$ is an ill-posed problem. One can easily understand this observation by realizing that image registration is a process that tries to determine a vector field describing the transformation, while the available data is a scalar field. This is the reason for introducing $\S$ as a Tikhonov regularizing term that `convexifies' the functional in (\ref{op:irp}). Well-posedness of the regularized problem (\ref{op:irp}) depends naturally on the choice of $\mathcal{D}$ and $\S$. In most of the standard settings, one can show existence of optimal solution, see \cite{LR}. On the other hand, showing uniqueness is rather difficult or even impossible. 

Among possible choices of dissimilarity measures, there are the sum of squared differences (SSD) \cite{HHH}, mutual information \cite{MCVMS}, cross correlation \cite{LWMFS} and others. Regarding regularizers, one typically uses diffusive \cite{FM1,T}, curvature \cite{FM2,FM3} or elastic energies \cite{HM} (see also \cite{MA} and the references therein). If the imaged objects are human tissues, the elastic regularizer becomes reasonable and at times even essential. For small deformations the linear elastic model has been successfully utilized in image registration. In order to handle large deformations several approaches have been proposed. The simplest one makes use of pre-processing with a large scale affine transformation \cite{BK}. One may also use nonlinear or hyperelasticity theory \cite{gigengack2012motion,burger2013hyperelastic,LRmt}. A completely different strategy, proposed by Christensen et.~al.~\cite{CRM}, uses arguments from the theory of fluid dynamics. In this formulation, the transformation is described by the velocity field rather than the displacement. Other additional constraints may also be introduced to guarantee a desired property of the transformation, e.g., volume preservation \cite{HHM} or local rigidity \cite{JMa}.


A different but related strategy to the one described in this paper was presented by Zhu et.~al.~\cite{ZYHT}, and further investigated by Rehman in \cite{TR}. These works are based on $L^2$ optimal mass transportation \cite{LE}, where the similarity between the two given images is measured locally by a partial differential equation describing the mass preservation property. Thus the image registration problem is expressed by the constrained optimization problem 
\begin{equation} \label{eq:TC}
\begin{aligned}
  \min & \quad W(\phi):= \int_{\Omega} |\phi(x)-x|^2\rho_R(x)dx\\
  \text{subject to} & \quad \det(\nabla\phi(x))\rho_T(\phi(x)) = \rho_R(x) \quad \text{for\; $x\in \Omega$},\\
       & \quad \phi(x) = x  \quad \text{for\; $x\in\partial\Omega$},
 \end{aligned}
\end{equation}
where $\rho_R$ and $\rho_T$ represent the mass densities for $R$ and $T$ respectively, and are derived from the input images. This eliminates the need of a regularization term and makes the image registration problem completely parameter-free. Our approach is motivated by similar arguments, since we believe that the input data are true representations of the mass densities $\rho_R$ and $\rho_T$ and there is no additional mass flow in the time interval when the shots $R$ and $T$ are made. However, here, instead of the classical $L^2$ transportation cost we minimize the linear elastic strain energy \cite{MEC}, which in terms of displacement $u$ has the following form
\begin{equation} \label{elas_energy}
  \S(u) := \int_{\Omega} \frac{\mu}{4}\sum_{i,j =1}^d \left(\frac{\partial u_i}{\partial x_j}+\frac{\partial u_j}{\partial x_i}\right)^2 + \frac{\lambda}{2} \left(\sum_{i=1}^d \frac{\partial u_i}{\partial x_i}\right)^2 dx,
\end{equation} 
where $\mu$ and $\lambda$ are two positive Lam{\' e} parameters describing the mechanical properties of the imaged object. Note, that the objective functional in (\ref{eq:TC}) does not take into account derivatives of the displacements. As a consequence, the minimizing direction may lead to irregular solution. In contrast, the elastic potential energy rules out non-regular mappings, thereby providing a more smooth result.

The paper is organized as follows. In Section 2 we describe the mathematical set-up and prove an existence result for the problem. Section 3 is devoted to describing the discretization process and the assembling of the discrete analogous to the continuous image registration problem. We further provide a detailed description of the discretization in Appendix A. In Section 4 we present the optimization strategy and formulate an algorithm to solve the image registration problem. In Section 5 we present two numerical examples and finally conclude in Section 6.

\section{Mathematical formulation and existence proof} \label{sec:mf}
We now describe a parameter-free approach to the image registration problem in 2D. Let $\Omega\subset\R^2$ be a sufficiently smooth domain and $\mathcal{M}$ denote a suitable function space of deformations over $\overline{\Omega}$. We will specify the particular choice of $\mathcal{M}$ later on, but we always require $\mathcal{M}$ to be a closed subset of a reflexive Banach space $X$. Notice that due to the linear elasticity assumption we have the following relation between the total deformation $\phi$ and the displacement field $u$: $\phi = \phi_\text{ref} + u$, where $\phi_\text{ref}$ is a reference deformation for which the strain energy vanishes. In our case we naturally set $\phi_\text{ref}$ to be the identity map $id$, i.e., $\phi_\text{ref}(x) = id(x)=x$ for all $x\in\overline{\Omega}$. For any given density functions $\rho_R, \rho_T:\Omega\rightarrow\R$ having equal mass, i.e.,
\begin{equation}
  \int_{\Omega}\rho_R(x)\,dx = \int_{\Omega}\rho_T(x)\,dx,
\end{equation}
we define the following constrained optimization problem:
\begin{equation} \label{IR}
\begin{aligned}
\min\nolimits_{\phi\in X} &\quad \tilde{\S}(\phi) := \S(\phi-\phi_\text{ref})\\
  \text{subject to}                    &\quad \tilde c(\phi;\rho_R,\rho_T)  := \det(\nabla\phi)\rho_T\circ\phi-\rho_R = 0 \quad \text{in }\  \Omega\\
                          &\quad \phi(x) =  x \quad \text{on }\  \partial\Omega.
\end{aligned}\tag{P$_\phi$}
\end{equation}

In order to simplify the considerations we put here several assumptions.
\begin{ass}\label{ass0}
The domain  $\Omega$ is an open and bounded subset of $\R^2$ with Lipschitz boundary $\partial \Omega$.
\end{ass}

\begin{ass}\label{ass1}
The image intensity functions are uniformly positive, i.e., there exists $\delta>0$ such that
\begin{equation} \label{ass:rop}
  \rho_R(x) \geq \delta ,\quad \rho_T(x) \geq \delta \quad \text{for all\; $x\in\Omega$}.
\end{equation}
Additionally we require $\rho_R$ to be continuous, and $\rho_T$ to be continuously differentiable, i.e.,
\begin{equation} \label{ass:rcd}
  \rho_R\in \C^0(\overline\Omega), \qquad \rho_T\in \C^1(\Omega)\cap \C^0(\overline\Omega).
\end{equation}
\end{ass}

With the above assumptions we may reformulate the mass preserving condition in (\ref{IR}) to an equivalent form with a separated determinant part, which reads
\begin{equation}  \label{eq:cmp}
  \bar{c}(\phi;\rho_R,\rho_T) := \det(\nabla \phi) - \frac{\rho_R}{\rho_T\circ \phi}.
\end{equation}
In order to shorten the notation we will skip the second and third arguments and simply write $\bar{c}(\phi) = \bar{c}(\phi;\rho_R,\rho_T)$. It is easy to see that with (\ref{ass:rop}) $\bar{c}$ is well defined and has the same null space as the mass preserving constraint $\tilde c$ in \eqref{IR}. Let us define $\mathcal{M}$ to be a subset of $X$ that contains all feasible deformations given by
\begin{equation} \label{M}
 \mathcal{M} := \Big\{\phi\in X:\ \bar{c}(\phi) = 0 \;\; \text{in $\Omega$},\ \phi(x)=x\;\; \text{on $\partial\Omega$}\Big\}.
\end{equation}
Note, that the double symmetric gradient of the reference deformation, $\phi_\text{ref}=id$, is the identity matrix $I$ and thus $\tilde{\S}$ becomes
\begin{equation} \label{Sphi}
  \tilde{\S}(\phi) = \frac12 \int_{\Omega} \frac{\mu}{2} \Big(\nabla\phi + \nabla\phi^\top - I\Big):\Big(\nabla\phi + \nabla\phi^\top - I\Big) + \lambda \Big(\text{tr}(\nabla\phi - I)\Big)^2 dx
\end{equation}

Finally, we formulate the existence theorem for our elastic mass preserving image registration problem (\ref{IR}). It has been shown in \cite{DM} that, under certain additional assumptions, one has the existence of mass preserving mappings in $\C^1(\Omega;\R^d)$. However, in our case we additionally require the solution space to be complete with respect to a norm derived from $\S$. Therefore, we naturally choose $X$ to be the Sobolev space $X= H^1(\Omega;\R^2)$.

\begin{Thm}\label{thm:existence}
Let $\Omega\subset\R^2$ satisfy Assumption \ref{ass0} and $\rho_R$ and $\rho_T$ be two real functions on $\Omega$ satisfying Assumption \ref{ass1}. Then the problem (\ref{IR}) has at least one optimal solution in $X$.
\end{Thm}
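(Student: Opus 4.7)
\medskip
\noindent\textbf{Proof proposal.} I would use the direct method in the calculus of variations. First I would verify that the feasible set $\mathcal{M}$ defined in \eqref{M} is nonempty, invoking the cited result of Dacorogna--Moser \cite{DM} which guarantees, under Assumption \ref{ass1}, the existence of at least one $C^1$ diffeomorphism $\phi_0$ fixing the boundary and satisfying the mass-preserving equation; since $\overline\Omega$ is bounded Lipschitz and $\phi_0\in C^1$, we have $\phi_0\in H^1(\Omega;\R^2)\cap\mathcal{M}$, hence $\inf_{\phi\in\mathcal{M}}\tilde{\S}(\phi)$ is finite and nonnegative. I would then pick a minimizing sequence $\{\phi_n\}\subset \mathcal{M}$ and write $u_n:=\phi_n-id\in H^1_0(\Omega;\R^2)$.

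Next I would establish coercivity. Because each $u_n$ vanishes on $\partial\Omega$, Korn's second inequality yields
\begin{equation*}
  \|u_n\|_{H^1(\Omega;\R^2)}^2 \;\le\; C\int_\Omega \tfrac{\mu}{4}\bigl|\nabla u_n+\nabla u_n^\top\bigr|^2 + \tfrac{\lambda}{2}(\operatorname{tr}\nabla u_n)^2\,dx \;=\; 2C\,\tilde{\S}(\phi_n),
\end{equation*}
so $\{u_n\}$, and hence $\{\phi_n\}$, is bounded in $H^1(\Omega;\R^2)$. Extracting a subsequence, I get $\phi_n\rightharpoonup\phi^\ast$ in $H^1$; by Rellich--Kondrachov, $\phi_n\to\phi^\ast$ strongly in every $L^q(\Omega;\R^2)$, $q<\infty$, and a.e.\ on $\Omega$ (after a further subsequence). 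The Dirichlet condition $\phi^\ast=id$ on $\partial\Omega$ passes to the limit by weak continuity of the trace operator. Weak lower semicontinuity of $\tilde{\S}$ on $H^1$ is standard since the integrand in \eqref{Sphi} is a nonnegative convex quadratic form of $\nabla\phi$.

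The main obstacle is passing to the limit in the nonlinear constraint $\bar c(\phi_n)=\det\nabla\phi_n-\rho_R/(\rho_T\circ\phi_n)=0$. For the composition, a.e.\ convergence of $\phi_n$ together with continuity of $\rho_T$ and the lower bound $\rho_T\ge\delta$ gives $\rho_T\circ\phi_n\to\rho_T\circ\phi^\ast$ pointwise a.e.\ and uniformly bounded away from zero; dominated convergence then yields strong $L^q$ convergence of $1/(\rho_T\circ\phi_n)$ for every $q<\infty$. For the Jacobian I would use the divergence structure available in two dimensions,
\begin{equation*}
  \det\nabla\phi \;=\; \partial_1\bigl(\phi_1\,\partial_2\phi_2\bigr) - \partial_2\bigl(\phi_1\,\partial_1\phi_2\bigr),
\end{equation*}
and test against smooth $\psi\in C_c^\infty(\Omega)$: the resulting integrals involve the product of a strongly $L^q$-convergent factor $\phi_{n,1}$ with a weakly $L^2$-convergent factor $\partial_i\phi_{n,j}$, so each product converges weakly in $L^{2-\varepsilon}$ for small $\varepsilon>0$. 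This is the standard compensated--compactness observation (in the spirit of Reshetnyak and Ball) that $\det\nabla\phi_n\rightharpoonup\det\nabla\phi^\ast$ in the sense of distributions.

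Finally I would combine the two convergences: for any test function $\psi\in C_c^\infty(\Omega)$,
\begin{equation*}
  \int_\Omega \psi\,\bar c(\phi_n)\,dx \;=\; \int_\Omega \psi\det\nabla\phi_n\,dx \;-\; \int_\Omega \psi\,\tfrac{\rho_R}{\rho_T\circ\phi_n}\,dx \;\longrightarrow\; \int_\Omega \psi\,\bar c(\phi^\ast)\,dx,
\end{equation*}
where the first term converges by the divergence argument above (the multiplier $\psi$ is smooth) and the second by strong $L^1$ convergence of the integrand. Since the left-hand side vanishes for every $n$, so does the limit, hence $\bar c(\phi^\ast)=0$ in the distributional sense and, by the integrability of both terms, a.e.\ in $\Omega$. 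Thus $\phi^\ast\in\mathcal{M}$ and, together with weak lower semicontinuity, $\tilde{\S}(\phi^\ast)\le\liminf\tilde{\S}(\phi_n)=\inf_{\mathcal{M}}\tilde{\S}$, proving $\phi^\ast$ is a minimizer. The compensated-compactness step and the verification that the limit deformation remains feasible (in particular that $\rho_T\circ\phi^\ast$ still makes sense, which is ensured by the uniform positivity in Assumption~\ref{ass1}) are the genuinely delicate ingredients.
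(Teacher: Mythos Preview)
Your proposal is correct and follows essentially the same direct-method route as the paper: nonemptiness of $\mathcal{M}$ via Dacorogna--Moser, coercivity from Korn's second inequality on $H^1_0$, weak lower semicontinuity of the convex quadratic energy, and weak sequential closedness of $\mathcal{M}$ via weak continuity of the two-dimensional Jacobian combined with a.e.\ convergence of $\rho_R/(\rho_T\circ\phi_n)$. The only cosmetic difference is that the paper quotes the weak-$*$ continuity of $\det\nabla$ as a black-box proposition and then exploits the uniform bound $|\det\nabla\phi_n|\le \delta^{-1}\|\rho_R\|_\infty$ (available because each $\phi_n$ already lies on $\mathcal{M}$) to identify the limit in $L^p$, whereas you spell out the underlying divergence-structure/compensated-compactness argument directly; both lead to the same identification $\det\nabla\phi^\ast=\rho_R/(\rho_T\circ\phi^\ast)$.
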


In order to prove the above theorem we proceed according to the direct method of calculus. We split the proof into three parts: 1) weak lower semicontinuity (w.l.s.c.) of $\tilde{\S}$ in $X$, 2) coercivity of $\tilde{\S}$ on $\mathcal{M}$, 3) weak sequentially closedness of $\mathcal{M}$ in $X$. If the three conditions are satisfied, the statement of Theorem \ref{thm:existence} directly follows. 

\begin{Lem}\label{lem:wlsc}
The functional $\tilde{\S}$ defined in (\ref{Sphi}) is weakly lower semicontinuous in $X$.
\end{Lem}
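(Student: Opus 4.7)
The plan is to reduce the claim to the standard fact that a convex, strongly continuous functional on a reflexive Banach space is weakly lower semicontinuous. Setting $u := \phi - id$, the functional $\tilde{\S}(\phi) = \S(u)$ is, up to the affine shift by $id \in X$, the classical linear elastic strain energy \eqref{elas_energy}. Since weak convergence is preserved under translation in $X$, it suffices to show that $\S$ is w.l.s.c.\ on $X$, which in turn follows from convexity and strong continuity.

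First I would verify continuity. Expanding the square in \eqref{Sphi} the integrand is a quadratic polynomial in the entries of $\nabla\phi$, so $\tilde{\S}$ factors as $\tilde{\S}(\phi)=F(\nabla\phi)$ with $F:L^2(\Omega;\R^{2\times 2})\to\R$ a continuous quadratic form plus lower-order terms. Continuity of $\nabla:X\to L^2(\Omega;\R^{2\times 2})$ then gives strong continuity of $\tilde{\S}$ on $X$.

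Next I would establish convexity. The integrand
\[
\frac{\mu}{4}\bigl(\nabla\phi+\nabla\phi^\top-I\bigr):\bigl(\nabla\phi+\nabla\phi^\top-I\bigr) + \frac{\lambda}{2}\bigl(\operatorname{tr}(\nabla\phi-I)\bigr)^2
\]
is (up to a constant and a linear term in $\nabla\phi$) a sum of squares of linear expressions in the entries of $\nabla\phi$, hence a convex quadratic form in the matrix variable $\nabla\phi$. Since $\mu,\lambda>0$ this is convex pointwise, and integration preserves convexity, so $\tilde{\S}:X\to\R$ is convex.

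Finally, invoking the standard result (a consequence of Mazur's lemma) that a convex strongly continuous functional on a Banach space is weakly lower semicontinuous yields the claim. The only mildly delicate point is making sure the affine shift $\phi_\text{ref}=id\in X$ does not spoil the argument, but this is immediate: if $\phi_n\rightharpoonup\phi$ in $X$, then $\phi_n-id\rightharpoonup \phi-id$ in $X$, so w.l.s.c.\ of $\S$ on the displacement $u=\phi-id$ transfers directly to w.l.s.c.\ of $\tilde{\S}$ on $\phi$. There is no real obstacle here; the lemma is a routine application of the direct method's first ingredient.
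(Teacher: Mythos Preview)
Your argument is correct and follows the same route as the paper: establish convexity of $\tilde{\S}$ (as a composition of linear maps with a convex quadratic form) together with strong continuity on $X$, then conclude weak lower semicontinuity via Mazur's lemma. The paper's proof is simply a terser statement of exactly this reasoning.
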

\begin{proof}
It is straightforward to observe that the functional $\tilde{\S}$ is a composition of linear and convex operators and is thus convex. Additionally, it is continuous with respect to the strong topology on $X$ and in consequence also weakly lower semicontinuous.
\end{proof}

We proceed with step 2) and show that $\tilde{\S}$ satisfies
\[
 \tilde{\S}(\phi) \rightarrow \infty\quad \text{as}\quad \Vert\phi\Vert_{X} \rightarrow \infty.
\]
For $d=2$ this is a direct consequence of Korn's inequality \cite{Korn}.

\begin{Prop}[Second Korn's inequality]
 Let $\Omega\subset\R^2$ satisfy Assumption \ref{ass0}. For a displacement field $u\colon\Omega\rightarrow\R^2$ we define the linear strain tensor
	\begin{equation*}
	  \epsilon_{ij}(u):=\frac12(\partial_i u^j + \partial_j u^i),\quad i,j\in\{1,...,d\}.
	\end{equation*}
	Then, there exists a positive constant $\beta = \beta(\Omega)$ such that it holds
	\begin{equation}\label{eq:Korn}
	  \sum_{i,j = 1}^2 \left\|\epsilon_{ij}(u)\right\|^2_{L^2} \geq \beta \left\|\nabla u\right\|^2_{L^2},\quad \text{for all }u\in X_0:= \big\{u\in X:\ u=0\;\;\text{on}\;\;\partial\Omega\big\}.
	\end{equation}
\end{Prop}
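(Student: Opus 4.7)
My plan is to give the standard integration-by-parts proof of Korn's inequality on $H^1_0$, which in fact yields the explicit constant $\beta = 1/2$ independent of the particular domain. The strategy reduces the claim to an identity rather than a genuine inequality, with the slack coming entirely from discarding a non-negative divergence term.

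First, I would expand the integrand algebraically:
\begin{equation*}
\sum_{i,j=1}^2 \epsilon_{ij}(u)^2 \;=\; \frac{1}{4}\sum_{i,j=1}^2\bigl(\partial_i u^j + \partial_j u^i\bigr)^2 \;=\; \frac12\,|\nabla u|^2 \,+\, \frac12 \sum_{i,j=1}^2 (\partial_i u^j)(\partial_j u^i).
\end{equation*}
So the proposition reduces to controlling the sign of the cross term $\sum_{i,j}\int_\Omega (\partial_i u^j)(\partial_j u^i)\,dx$.

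Next, for a test function $u \in \C^\infty_c(\Omega;\R^2)$, I would integrate by parts twice in the cross term: first move $\partial_i$ off $u^j$, and then move $\partial_j$ off $u^i$. Both boundary contributions vanish by compact support, and the mixed partials can be commuted, yielding
\begin{equation*}
\sum_{i,j=1}^2 \int_\Omega (\partial_i u^j)(\partial_j u^i)\,dx \;=\; \sum_{i,j=1}^2 \int_\Omega (\partial_j u^j)(\partial_i u^i)\,dx \;=\; \int_\Omega (\operatorname{div} u)^2\,dx \;\geq\; 0.
\end{equation*}
Combining the two steps, I obtain the sharp identity
\begin{equation*}
\sum_{i,j=1}^2 \|\epsilon_{ij}(u)\|_{L^2}^2 \;=\; \frac12\|\nabla u\|_{L^2}^2 \,+\, \frac12\|\operatorname{div} u\|_{L^2}^2 \;\geq\; \frac12\|\nabla u\|_{L^2}^2,
\end{equation*}
which proves the proposition with $\beta = 1/2$.

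Finally, I would extend from $\C^\infty_c(\Omega;\R^2)$ to all $u \in X_0 = H^1_0(\Omega;\R^2)$ by the density of $\C^\infty_c$ in $H^1_0$ (here Assumption \ref{ass0} is used only to ensure this density). Both sides of the displayed identity are continuous with respect to the $H^1$-norm, so the bound passes to the limit. There is no real obstacle here: in sharp contrast with the much more delicate second Korn's inequality on $H^1(\Omega)$ modulo rigid motions (which requires Lions' lemma or a Ne\v{c}as-type compactness argument and depends genuinely on the geometry of $\Omega$), the $H^1_0$ version is purely algebraic and the only thing to watch is justifying the two integrations by parts, which is trivial for compactly supported smooth functions.
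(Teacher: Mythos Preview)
Your proof is correct and entirely standard. The paper itself does not prove this proposition: it merely states it with a citation to the literature \cite{Korn} and then uses it as a black box in Lemma~\ref{lem:coercive}. There is therefore no ``paper's own proof'' to compare against.

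Your argument in fact yields a sharper conclusion than the paper claims: you obtain the explicit, domain-independent constant $\beta = 1/2$, whereas the paper writes $\beta = \beta(\Omega)$. This is because, as you correctly observe, the $H^1_0$ version of Korn's inequality (often called the \emph{first} Korn inequality, despite the paper's labelling) is a purely algebraic identity plus a discarded non-negative term, in contrast to the genuinely geometric second Korn inequality on $H^1(\Omega)$ modulo rigid motions. Your remark that Assumption~\ref{ass0} is needed only to identify $X_0$ with the closure of $\C_c^\infty(\Omega;\R^2)$ (equivalently, to make the trace well-defined) is accurate.
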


\begin{Lem}\label{lem:coercive}
The functional $\tilde{\S}$ defined in (\ref{Sphi}) is coercive on $\mathcal{M}\subset X$.
\end{Lem}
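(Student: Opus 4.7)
The plan is to reduce coercivity on $\mathcal{M}$ to Korn's inequality by exploiting the Dirichlet boundary condition built into $\mathcal{M}$. Write $u := \phi - \phi_\text{ref} = \phi - id$. Because every $\phi \in \mathcal{M}$ satisfies $\phi(x) = x$ on $\partial\Omega$, the trace of $u$ vanishes on $\partial\Omega$, so $u \in X_0$. In particular we may apply the second Korn inequality to $u$.

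Next I would rewrite the integrand of $\tilde{\S}(\phi)$ in terms of $u$. Since $\nabla \phi = I + \nabla u$, the symmetric combination $\nabla\phi + \nabla\phi^\top - I$ appearing in \eqref{Sphi} equals $\nabla u + \nabla u^\top + I$, but a direct substitution using $\S(u)$ from \eqref{elas_energy} (which is what $\tilde{\S}(\phi)$ is defined to be) gives cleanly
\begin{equation*}
  \tilde{\S}(\phi) \;=\; \S(u) \;=\; \mu \sum_{i,j=1}^2 \|\epsilon_{ij}(u)\|_{L^2}^2 \;+\; \frac{\lambda}{2}\, \|\operatorname{tr}(\nabla u)\|_{L^2}^2,
\end{equation*}
using that $\sum_{i,j}(\partial_j u_i + \partial_i u_j)^2 = 4\sum_{i,j} \epsilon_{ij}(u)^2$. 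Dropping the nonnegative dilation term and applying \eqref{eq:Korn} yields $\tilde{\S}(\phi) \geq \mu\beta \|\nabla u\|_{L^2}^2$.

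I would then combine this with the Poincaré inequality on $X_0$ to get a constant $C>0$ such that $\tilde{\S}(\phi) \geq C \|u\|_{H^1}^2$. Finally, since $id \in X$ has a fixed finite norm (as $\Omega$ is bounded by Assumption \ref{ass0}), the triangle inequality gives $\|u\|_X \geq \|\phi\|_X - \|id\|_X$, so $\|\phi\|_X \to \infty$ forces $\|u\|_X \to \infty$ and hence $\tilde{\S}(\phi) \to \infty$.

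There is no substantive obstacle: the proof is essentially a one-line application of Korn's inequality once the boundary condition in $\mathcal{M}$ is used to place $u$ in $X_0$. The only thing to be mindful of is that coercivity holds on $\mathcal{M}$ (not on all of $X$, where $\tilde{\S}$ is invariant under rigid translations and would fail to be coercive); the Dirichlet trace condition is precisely what rules out this obstruction.
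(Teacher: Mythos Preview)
Your proof is correct and follows essentially the same route as the paper: write $u=\phi-\phi_\text{ref}\in X_0$ via the boundary condition in $\mathcal{M}$, drop the nonnegative $\lambda$-term, apply Korn's inequality together with Poincar\'e to control $\|u\|_X$, and conclude coercivity in $\phi$ since $\phi_\text{ref}=id$ is a fixed element of $X$. You also correctly sidestepped the apparent typo in \eqref{Sphi} by appealing directly to the definition $\tilde{\S}(\phi)=\S(u)$.
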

\begin{proof}
Set $\mathcal{N}:= \{\phi_\text{ref} + u:\ u\in X_0\}$ and for $\phi\in\mathcal{N}$ define $u_{\phi}\in X_0$ with $\phi = u_{\phi} + \phi_\text{ref}$. Then, we have
  \begin{align*}
    \tilde{\S}(\phi) &= \int_{\Omega} \mu\sum_{i,j=1}^d \epsilon_{ij}(u_{\phi})^2 + \frac{\lambda}{2}\bigg(\sum_{i=1}^d\partial_i u_i\bigg)^2 dx 
                        \geq \mu \int_{\Omega}\sum_{i,j=1}^d \epsilon_{ij}(u_{\phi})^2 dx \geq \mu\,\beta(\Omega) \Vert u_{\phi}\Vert^2_X,
  \end{align*}
  where the last inequality follows from \eqref{eq:Korn} and the Poincar\'e inequality.
  Observe that $\mathcal{M}\subset \mathcal{N}$ and since $\phi_\text{ref}$ is a constant element in $X$ the claim follows.
\end{proof}

Next, we show that $\mathcal{M}$ is weakly sequentially closed in $X$. It is well known that for $p \geq d$ the Jacobian map
\[
 \det D:W^{1,p}(\Omega;\R^2) \rightarrow L^{p/d}(\Omega);\quad (\det D)(\phi) := \det(\nabla\phi),
\]
is continuous if we endow both spaces with the strong topology. For $p>d$ we even have continuity in the weak topology. In fact, in two dimensions the special case $p=d$ is sufficient to show existence. 

\begin{Rem}
Unfortunately when $d = 3$ the above consideration requires one to set $p>3$. However, since we wish to stay in Hilbert spaces it is necessary to require more regularity from the functional.
\end{Rem}

In the following we untilize a classical result on the weak continuity of Jacobian determinants \cite{DAC,PMS}.

\begin{Prop}\label{lem:wcjd}
  Let $\Omega\subset\R^2$ satisfy Assumption \ref{ass0}. Choose a sequence $\{\phi_k\}\subset X$ such that $\phi_k \rightharpoonup \phi$ for some $\phi$ in $X$. Then, $\det(\nabla\phi_k) \rightharpoonup^* \det(\nabla\phi)$ in the sense of measures, i.e.,
\begin{equation*}
      \int_{\Omega}\det(\nabla\phi_k)\,\psi\, dx \rightarrow \int_{\Omega}\det(\nabla\phi)\,\psi\, dx\quad \text{ for all } \psi\in \C_0^{\infty}(\Omega).
\end{equation*}
\end{Prop}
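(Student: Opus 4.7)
The plan is to exploit the null-Lagrangian structure of the two-dimensional Jacobian determinant, which allows $\det(\nabla\phi)$ to be expressed as a distributional divergence involving only a single gradient factor. Concretely, for smooth $\phi$ the symmetry of mixed partials gives the identity
\[
  \det(\nabla\phi) = \partial_1\bigl(\phi^1\,\partial_2\phi^2\bigr) - \partial_2\bigl(\phi^1\,\partial_1\phi^2\bigr).
\]
The crucial feature is that after testing against $\psi\in \C_0^\infty(\Omega)$ and integrating by parts, only one power of $\nabla\phi$ appears in each term, while $\phi^1$ enters without any derivative. This structural observation is what drives the weak continuity.

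First I would establish, for $\phi\in \C^\infty(\overline\Omega;\R^2)$ and $\psi\in \C_0^\infty(\Omega)$, the tested identity
\[
  \int_\Omega \det(\nabla\phi)\,\psi\,dx = -\int_\Omega \phi^1\,\partial_2\phi^2\,\partial_1\psi\,dx + \int_\Omega \phi^1\,\partial_1\phi^2\,\partial_2\psi\,dx
\]
by applying the divergence theorem together with $\psi|_{\partial\Omega}=0$. A H\"older estimate shows that both sides depend continuously on $\phi$ in the strong $H^1(\Omega;\R^2)$ topology, so density of smooth maps in $X$ extends the identity to arbitrary $\phi\in X$.

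Next I would pass to the weak limit. Given $\phi_k\rightharpoonup\phi$ in $X$, the Rellich--Kondrachov theorem on a bounded Lipschitz domain in $\R^2$ yields $\phi_k^1\to\phi^1$ strongly in $L^q(\Omega)$ for every $q<\infty$, while $\nabla\phi_k \rightharpoonup \nabla\phi$ weakly in $L^2(\Omega;\R^{2\times 2})$. For each fixed $\psi\in \C_0^\infty(\Omega)$ the product $\phi_k^1\,\partial_i\psi$ then converges strongly to $\phi^1\,\partial_i\psi$ in $L^2$, so pairing with the weakly convergent $\partial_j\phi_k^2$ yields, by the standard strong-times-weak rule,
\[
  \int_\Omega \phi_k^1\,\partial_j\phi_k^2\,\partial_i\psi\,dx \longrightarrow \int_\Omega \phi^1\,\partial_j\phi^2\,\partial_i\psi\,dx.
\]
Substituting back into the tested identity applied to $\phi_k$ and to $\phi$ gives the desired convergence of $\int_\Omega \det(\nabla\phi_k)\,\psi\,dx$.

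The step I expect to be most delicate is the rigorous justification of the divergence identity at $H^1$ regularity. This is precisely where the two-dimensional setting is essential: the critical Sobolev embedding $H^1\hookrightarrow L^q$ for all finite $q$ supplies enough integrability on $\phi^1$ to balance the single gradient factor of $\phi^2$, and the bilinear pairing against $\nabla\psi$ becomes strong-weak continuous on $X$. This balance fails in higher dimensions, which is consistent with the remark immediately preceding the proposition.
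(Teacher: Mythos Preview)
Your argument is correct and is in fact the standard proof of this classical weak-continuity result. The paper, however, does not prove the proposition at all: it simply records it as a known fact with references \cite{DAC,PMS} and then uses it as a black box in the proof of Lemma~\ref{lem:wsc}. So there is nothing to compare against; you have supplied a full proof where the paper supplies only a citation.

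One small remark on presentation: since the paper later invokes Proposition~\ref{lem:wcjd} only through duality with $\psi\in\C_0^\infty(\Omega)$, your careful extension of the divergence identity from smooth maps to all of $X=H^1(\Omega;\R^2)$ via density is exactly what is needed, and your observation that the 2D embedding $H^1\hookrightarrow L^q$ for all $q<\infty$ is what makes the strong-times-weak pairing work dovetails with the paper's Remark preceding the proposition about the failure in $d=3$.
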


With Proposition~\ref{lem:wcjd} at hand, we can now prove weak sequentially closedness of $\mathcal{M}$.

\begin{Lem}\label{lem:wsc}
 Let $\Omega\subset\R^2$ satisfy Assumption \ref{ass0} and $\rho_R$, $\rho_T$ satisfy Assumption \ref{ass1}. Then, the mass preserving manifold $\mathcal{M}$ is weakly sequentially closed in $X$.
\end{Lem}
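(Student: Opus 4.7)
My plan is to take an arbitrary sequence $\{\phi_k\}\subset \mathcal{M}$ with $\phi_k\rightharpoonup\phi$ in $X$ and show $\phi\in\mathcal{M}$, i.e.\ verify both the Dirichlet boundary condition and the pointwise constraint $\bar c(\phi)=0$ in the limit. The boundary condition is the easy part: the trace operator $\gamma:H^1(\Omega;\R^2)\to H^{1/2}(\partial\Omega;\R^2)$ is linear and bounded, hence weakly continuous, so $\gamma\phi_k = \gamma(\mathrm{id})$ for all $k$ passes to $\gamma\phi = \gamma(\mathrm{id})$, i.e.\ $\phi(x)=x$ on $\partial\Omega$.

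For the constraint $\bar c(\phi)=0$, I would rewrite each $\phi_k\in\mathcal{M}$ in the equivalent form
\begin{equation*}
  \det(\nabla\phi_k) = \frac{\rho_R}{\rho_T\circ\phi_k} \quad\text{a.e.\ in }\Omega,
\end{equation*}
and test both sides against an arbitrary $\psi\in \C_0^\infty(\Omega)$. The left-hand integrals converge to $\int_\Omega \det(\nabla\phi)\,\psi\,dx$ directly by Proposition~\ref{lem:wcjd}. For the right-hand side, since $\Omega\subset\R^2$ is bounded and Lipschitz, the Rellich--Kondrachov theorem gives compact embedding $H^1(\Omega;\R^2)\hookrightarrow L^q(\Omega;\R^2)$ for every $q\in[1,\infty)$, so $\phi_k\to\phi$ strongly in $L^q$. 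Passing to a subsequence (which is harmless since the limit will be identified uniquely), I may assume $\phi_k\to\phi$ pointwise a.e.\ in $\Omega$. The continuity of $\rho_T$ on $\overline\Omega$ then yields $\rho_T\circ\phi_k\to\rho_T\circ\phi$ a.e., and since $\rho_T\ge\delta>0$ and $\rho_R\in \C^0(\overline\Omega)$ is bounded, the quotients $\rho_R/(\rho_T\circ\phi_k)$ are uniformly bounded by $\|\rho_R\|_\infty/\delta$. Lebesgue's dominated convergence theorem therefore gives $\rho_R/(\rho_T\circ\phi_k)\to \rho_R/(\rho_T\circ\phi)$ in $L^p(\Omega)$ for every $p<\infty$, in particular against the test function $\psi$.

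Combining the two convergences produces the identity
\begin{equation*}
  \int_\Omega \det(\nabla\phi)\,\psi\,dx = \int_\Omega \frac{\rho_R}{\rho_T\circ\phi}\,\psi\,dx \qquad \text{for every } \psi\in \C_0^\infty(\Omega).
\end{equation*}
Because $\det(\nabla\phi)\in L^1(\Omega)$ (as $\nabla\phi\in L^2$) and $\rho_R/(\rho_T\circ\phi)\in L^\infty(\Omega)$, their difference lies in $L^1(\Omega)$ and annihilates all smooth compactly supported test functions, so by the fundamental lemma of the calculus of variations $\det(\nabla\phi)=\rho_R/(\rho_T\circ\phi)$ a.e.\ in $\Omega$. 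This is exactly $\bar c(\phi)=0$, hence $\phi\in \mathcal{M}$.

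The main obstacle is the transition from weak convergence $\phi_k\rightharpoonup\phi$ in $X$ to sufficiently strong convergence of the nonlinear composition $\rho_T\circ\phi_k$; this is exactly where the two-dimensional geometry is essential, because it allows Rellich--Kondrachov to provide compactness in every $L^q$ with $q<\infty$, so that the pointwise a.e.\ convergence of $\phi_k$ can be exploited together with the continuity and positivity assumptions on $\rho_T$. In contrast, the weak convergence of the determinants is already handled for us by the classical result quoted in Proposition~\ref{lem:wcjd}, so the matching of both convergences through $\C_0^\infty(\Omega)$ test functions closes the argument.
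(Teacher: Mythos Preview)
Your proof is correct and follows essentially the same route as the paper's: both use compact embedding to extract an a.e.\ convergent subsequence, combine continuity and the lower bound $\rho_T\ge\delta$ to pass to the limit in $\rho_R/(\rho_T\circ\phi_k)$, and invoke Proposition~\ref{lem:wcjd} for the Jacobian determinants. The only cosmetic difference is that the paper phrases the final identification by showing $\det(\nabla\phi_{k_i})\rightharpoonup F$ in $L^p$ (via a.e.\ convergence plus uniform boundedness) and then compares with Proposition~\ref{lem:wcjd}, whereas you pair the determinant and the right-hand side against test functions directly and finish with the fundamental lemma; the ingredients and logic are the same.
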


\begin{proof}
Let $\{\phi_k\}\subset\mathcal{M}$ be a sequence such that $\phi_k\rightharpoonup \phi$ for some $\phi\in X$. Due to the results in \cite{DM}, $\mathcal{M}$ is non-empty and thus such a  sequence exists. We want to show that $\phi\in\mathcal{M}$, i.e., it holds $\det(\nabla\phi) = \rho_R/\rho_T\circ \phi$ in $\Omega$ and $\phi = \phi_\text{ref}$ on $\partial\Omega$.

  The second condition is trivial, hence let us prove the first one. Due to Sobolev's embedding theorem, we have the strong convergence $\phi_k \rightarrow \phi$ in $L^2(\Omega;\R^2)$. Consequently, $\{\phi_k\}$ possesses a a.e.~ converging subsequence $\{\phi_{k_i}\}\subset \{\phi_k\}$ such that $\phi_{k_i}(x) \rightarrow \phi(x)$ for a.e.~$x\in\Omega$. From Assumption~\ref{ass1} we further obtain
  \begin{equation*}
    \rho_R/\rho_T\circ\phi_{k_i} =: F_i \longrightarrow F := \rho_R/\rho_T\circ\phi \quad \text{ for a.e.~$x\in\Omega$}.
  \end{equation*}
  Since $\{\phi_{k_i}\}\subset\mathcal{M}$ we additionally have
  \begin{equation*}
    \det(\nabla\phi_{k_i}) = F_i \quad \text{a.e.~in $\Omega$},
  \end{equation*}
  and thus $\det(\nabla\phi_{k_i}) \rightarrow F$  for a.e.~$x\in\Omega$.
  
  Observe that the functions $\det(\nabla\phi_{k_i})$ are uniformly  bounded. Indeed, due to Assumption~\ref{ass1},
  \[
   |\det(\nabla\phi_{k_i})| = |\rho_R/\rho_T\circ\phi_{k_i}| \le (1/\delta)\|\rho_R\|_\infty \quad\text{for all\; $i\in\mathbb{N}$}.
  \]
  Therefore, we have that
  \begin{equation*}
    \det(\nabla\phi_{k_i}) \rightharpoonup F \quad \text{in }\ L^p(\Omega;\R^2),\ \ 1\leq p <\infty.
  \end{equation*}
  In particular, the above holds for $L^2(\Omega,\R^2)$, i.e.,
  \begin{equation*}
    \quad \int_{\Omega} \det(\phi_{k_i}(x))\psi(x),\ dx \rightarrow \int_{\Omega}F(x)\psi(x)\, dx\quad \text{ for all } \psi\in L^2(\Omega;\R^2).
  \end{equation*}
  Hence, Proposition~\ref{lem:wcjd} gives $F = \det(\nabla\phi)$ which concludes the proof.
\end{proof}

Finally, we can prove Theorem \ref{thm:existence}.

\begin{proof}[Proof of Theorem \ref{thm:existence}]
  Let $\{\phi_k\}$ be a minimizing sequence of $\tilde\S$ in $\mathcal{M}$, i.e., 
  \begin{equation*}
  \lim\nolimits_{k\rightarrow\infty}\tilde\S(\phi_k) = \inf\nolimits_{\phi\in\mathcal{M}}\tilde\S(\phi).
  \end{equation*}
  Due to coercivity (Lemma \ref{lem:coercive}), this sequence must be bounded. Therefore, there is a subsequence $\{\phi_{k_i}\}\subset \{\phi_k\}$ and $\phi\in X$ such that $\phi_{k_i}\rightharpoonup \phi$. Due to weak sequential closedness of $\mathcal{M}$ we deduce $\phi\in\mathcal{M}$. Finally, the weak lower semicontinuity of $\tilde\S$ (Lemma \ref{lem:wlsc}) ensures that $\phi$ is a desired global minimizer.
\end{proof}

In the following, we consider an equivalent formulation of \eqref{IR} for displacement fields given by
\begin{equation} \label{UR}
\begin{aligned}
\min\nolimits_{u\in X_0} &\quad \S(u) \\
\text{subject to} &\quad c(u) :=\tilde c(\phi_\text{ref}+u;\rho_R,\rho_T) = 0 \quad \text{in }\  \Omega.
\end{aligned}\tag{P$_u$}
\end{equation}

\begin{Rem}
As mentioned, in the 3D case the Jacobian determinant involved in the constraints creates some difficulties. One can either leave the Hilbert setting and prove existence in $W^{1,p}$, with $p\ge 3$ or stay in Hilbert space but require more regularity. This can be done, for example, by adding a second-order term to the objective functional. 
\end{Rem}


\section{Discretization}\label{sec:disc}
Our numerical method attempts to solve the optimization problem \eqref{UR} by using a staggered finite difference (FD) discretization on a uniform grid, which is a common practice in computational fluid dynamics (CFD) \cite{CFD}. The choice of regular grid is in some sense determined by the input image data which, in fact, corresponds to a regular grid of pixel intensities. Moreover, the use of a regular lattice highly reduces the required storage. Since image registration is often applied to very high resolution images, the above argument can be important. 
Additionally, in general FD methods lead to sparser stencils than their finite element counterpart. Finally, it is well known in CFD that non-staggered discretization schemes may lead to grid-scale oscillations. Since our discrete problem resembles the stationary Navier--Stokes equations, we discretize \eqref{UR} on a staggered grid in order to stabilize the numerical method and prevent undesired oscillations. The complete and precise discretization process is described in Appendix \ref{app:discretization}.

In this numerical section, we choose the discretize-then-optimize strategy \cite{MDG}, i.e., we first discretize the continuous constrained optimization problem \eqref{UR} and then derive the required discrete optimality conditions.
The solution procedure for the resulting nonlinear system isdescribed in the next section. 

Here, we simply clarify that $\Omega_c$ and $\Omega_u$ are discrete computational domains of cell centered and edge centered grid points. By $H_c$ and $H_u$ we denote the discrete function spaces defined on $\Omega_c$ and $\Omega_u$, respectively (cf.~Appendix~\ref{subsec:staggered_grid}). Moreover, let $c^h$ and $S^h$ denote the constraint and objective functional that act on functions in $H_u$ and yield values in $H_c$ and $\mathbb{R}$, respectively (cf.~Appendix~\ref{sec:dc} and \ref{sec:dr}). With these notations, we obtain a finite dimensional equality constrained optimization problem with $n = |\Omega_u|$ unknowns and $m = |\Omega_c|$ constraints:
\begin{equation} \label{eq:IRDopt}
\begin{aligned}
  \min\nolimits_{u^h\in H_u} & \quad S^h(u^h)\\
  \text{subject to}  & \quad c^h(u^h) = 0,
\end{aligned}\tag{P$_h$}
\end{equation}
where the boundary conditions are already included in $S^h$ and $c^h$. One can easily observe that $S^h$ and $c^h$ are both polynomials in $u^h$ and thus continuously differentiable. Letting the variable $p^h\in H_c$ denote the Lagrange multiplier, we construct the Lagrangian function as
\begin{equation}
  L(u^h,p^h) = S^h(u^h) + \langle p^h, c^h(u^h)\rangle_{H_c},
\end{equation}
where $\langle\cdot,\cdot \rangle_{H_c}$ denotes the inner product in the Hilbert space $H_c$. The classical Karush--Kuhn--Tucker (KKT) conditions for optimality then read
\begin{equation}
\begin{aligned}
  d_{u}L(u^h,p^h)[v] &:= \delta S^h(u^h)[v] + \langle p^h, \delta c^h(u^h)[v]\rangle_{H_c} \quad & \text{ for all } v\in H_u,\\
  d_{p}L(u^h,p^h)[q] &:= \langle q,c^h(u^h)\rangle_{H_u}\quad  &\text{ for all } q\in H_c,
\end{aligned}
\end{equation}
where the derivatives $\delta S^h(u^h)\in H_u^*$ and $\delta c^h(u^h)\in \mathcal{L}(H_u,H_c)$ are defined in (\ref{eq:dS}) and (\ref{eq:dc}), respectively. Using the Riesz representation theorem, the above system reduces to
\begin{equation} \label{eq:IRDopt_cond}
\begin{aligned}
  A^hu^h + \textbf{B}^{h,\top}(u^h)p^h &=0,\\
  c^h(u^h)&=0,
\end{aligned}\tag{OC$_h$}
\end{equation}
with $A^h$ defined in (\ref{eq:A}) and $\textbf{B}^h(u^h)$ being the $m\times n$ Jacobian matrix of $c^h$ evaluated at $u^h$.

\begin{Rem}
One easily notices that the divergence operator $\nabla\cdot$ is a linearization of the Jacobian determinant $\det D$. Additionally, $A^h$ stands for the elliptic Navier--Lam{\' e} operator and thus with $\rho_T \equiv 1$ the system (\ref{eq:IRDopt_cond}) mimics the Stokes system.
\end{Rem}

\section{Optimization} \label{sec:opt}
In the following present the optimization algorithm. As it is common in image registration we follow the multiresolution strategy $rl=0\rightarrow rl=-1 ... \rightarrow rl=-rl_{min}$, where at each lower resolution level, $rl$, we have to solve a smaller optimization problem (cf.~Section~\ref{sec:mrs}). This problem is solved iteratively by an inexact SQP algorithm, where the initial guess is taken from the solution at the lower resolution level. At each stage of the iterative process we need to solve a possibly large KKT system and this is done by a Krylov subspace method with a suitable preconditioner.

\subsection{One level image registration}
For a fixed resolution level $rl$ let $\rho_R^{rl}$, $\rho_T^{rl}$ be the input mass density functions expected to be registered. The resolution of images determine uniquely the shape of discrete domains $\Omega_c$ and $\Omega_u$ where we set up the discrete analogue of the objective function to be minimized and the constraints (cf.~Section~\ref{sec:disc}). Thus we have a nonlinear optimization problem of type (\ref{eq:IRDopt}) and wish to find an optimal solution $u^h_{rl}$. For notational convenience we omit the subscript $._{rl}$ indicating the current resolution level. In order to solve (\ref{eq:IRDopt}) we use the framework of Sequential Quadratic Programming (SQP) \cite{NW}. Due to the absence of inequality constraints, the SQP method may be interpreted as a kind of Newton method applied to the optimality conditions (\ref{eq:IRDopt_cond}) derived from (\ref{eq:IRDopt}) (cf. \cite{BCN}). 

Let $u_0^h\in\mathbb{R}^n$ be an initial guess for the displacement, typically being the prolongation from the previous resolution level or set to zero for the coarsest space,  and $p^h_0\in\mathbb{R}^m$ be an initial guess for the Lagrange multiplier. The SQP method consist of iteratively updating the approximate displacement $u^h_k$ by an appropriate correction vector $\delta u^h_k$ in the primal space $\mathbb{R}^n$ and $p^h_k$ by a vector $\delta p^h_k$ in the dual space $\mathbb{R}^m$. This can be obtained by solving the KKT system
\begin{equation}\label{eq:KKT}
 \mathcal{K}_k \delta_k :=
  \begin{pmatrix}
    W_k & B_k^\top\\
    B_k & 
  \end{pmatrix}
  \begin{pmatrix}
    \delta u^h_{k}\\
    \delta p^h_{k}
  \end{pmatrix} = 
  -\begin{pmatrix}
    A^h u^h_k + B_k^\top p^h_k\\
    c^h(u^h_k)
  \end{pmatrix} =: -b_k,
\end{equation}
for one level of the SQP algorithm, where $\mathbb{R}^{m\times n}\ni B_k := {\bf B}^h(u^h_k)$ and $W_k\in\mathbb{R}^{n\times n}$ is an approximation of the Lagrangian Hessian
\begin{equation*}
  W_{k,ij}\approx A^h_{ij} + \sum_{l=1}^m \frac{\partial^2 c^h_i}{\partial u_i \partial u_j}p^h_{k,l}.
\end{equation*}
The exact Hessian requires second-order derivatives of the constraints which are expensive to compute. Therefore, we omit this term in our algorithm, i.e., we simply set $W_k = A^h$. 

\begin{Rem}
Observe that whenever $c^h_i$ is approximately an affine function or $p^h_k$ is very small, this approximation is almost accurate. On the other hand, by omitting this term we loose second-order information which may affect the rate of convergence. Nevertheless, in our experience, computing the second-order derivatives of $c^h$ is more costly than simply requiring more iterations to obtain a desired tolerance with $W_k=A^h$.
\end{Rem}

At the lower resolution level, the KKT system \eqref{eq:KKT} may be solved by a direct method, for example, the LU factorization. Unfortunately, when $m,n \gg1$ direct solvers call for a large amount of memory that may be not realizable on a common computer. We therefore allow for an approximate solutions of (\ref{eq:KKT}) obtained by a preconditioned generalized residual method (cf.~Section~\ref{sec:KKT_solver}). Additionally, we incorporate a specific stopping criteria to compromise the computational cost and convergence \cite[Algorithm: Inexact SQP with Gmres and Smart Tests]{BCN}. 

After an approximated solution $\delta_k = (\delta u^h_k,\delta p^h_k)$ is obtained, we use a globalization strategy given by a  line-search, where the next iterate is obtained by updating the previous one with an appropriate scaling of the computed corrections, i.e.,
\begin{equation*}
  \begin{pmatrix}
    u^h_{k+1}\\
    p^h_{k+1}
  \end{pmatrix} = 
  \begin{pmatrix}
    u^h_k\\
    p^h_k
  \end{pmatrix} + \tau_k
  \begin{pmatrix}
    \delta u^h_{k}\\
    \delta p^h_{k} 
  \end{pmatrix},
\end{equation*}
where $\tau_k$ is determined by the backtracking Armijo algorithm satisfying the condition
\begin{equation} \label{eq:Armijo}
  g(u^h_k+\tau_k\delta u^h_{k}) \leq g(u^h_k) - \eta\tau_k\nabla g(u^h_k)\cdot \delta u^h_{k}, \qquad 0<\eta<1,
\end{equation}
where $g$ is an appropriately chosen SQP merit function which plays an extremely important role in the optimization process. To achieve convergence for convex problems one may simply use the merit function $g(u^h_k) = \Vert b_k \Vert ^2$, where $b_k$ is defined in (\ref{eq:KKT}). Nonconvex problems, however, require more complex merit functions to guarantee convergence. Here, we choose the augmented $l_2$ version
\begin{equation}\label{eq:mf}
  g(u^h,\sigma) = S^h(u^h) + \langle p^h,c^h(u^h)\rangle + (\sigma/2) \Vert c^h(u^h) \Vert_2^2,
\end{equation}
where $\sigma > 0$ is an appropriately chosen penalty parameter \cite{NW}.

\begin{Rem}
In order to avoid foldings we extend the Armijo algorithm by a `diffeomorphic test' \cite{JM}. We have experienced that for irregular images the approximated correction vectors may cause the solution to be nondiffeomorphic, i.e., foldings may appear in the deformed grid. This happens whenever the approximated Jacobian determinant changes sign. 
For this reason, we additionally require
\begin{equation}\label{eq:diffeom}
  \min\nolimits_{x\in\Omega_c} V^h(u^h)(x) \geq \delta,
\end{equation}
for some fixed $\delta>0$, where $V^h$ is an approximation of the determinant defined in \eqref{eq:det_h}. More precisely, we choose $\tau_k$ sufficiently small so as to satisfy both \eqref{eq:Armijo} and \eqref{eq:diffeom}.
\end{Rem}

%

Finally, we provide the general scheme for solving (\ref{eq:IRDopt}).

\begin{alg}[One level image registration]\text{ }\label{alg:olir}\\
Given: $u_0^h$, $p_0^h$, stopping criteria\\
Initialize: set $k=0$\\
While the stopping criteria is not satisfied:
\begin{enumerate}
  \item set up the KKT system (\ref{eq:KKT}) for $(u^h_k,p^h_k)$,
  \item compute an approximate solution $(\delta u^h_{k},\delta p^h_{k})$ of (\ref{eq:KKT}),
  \item apply the backtracking Armijo line search to obtain $\alpha_k$ satisfying (\ref{eq:Armijo}),
  \item update $(u^h_{k+1},p^h_{k+1}) = (u^h_k,p^h_k)+\alpha_k (\delta u^h_k,\delta p^h_{k})$,
  \item update $k = k+1$,
\end{enumerate}
\end{alg}

\subsection{KKT solver}\label{sec:KKT_solver}
Our optimization method described in the previous section is based on the assumption that the direct solution of the KKT system (\ref{eq:KKT}) is impossible or hard to obtain. If we are dealing with high resolution images (particularly in 3D) our problem may easily reach a few million unknowns. Therefore, we apply the generalized minimal residual method (GMRES) which is both `relatively cheap' and stable. As known, the GMRES is guaranteed to be convergent in at most $N$ iterations where $N$ is the matrix size. Naturally performing $N$ iterations is not cheap in the sense of the required computational time. Nevertheless, we expect to obtain a reasonable approximation after a relatively small number of iterations. 
Furthermore, since our KKT system is ill-conditioned, we cannot expect fast convergence. Hence, there is a need for a good preconditioner. 
In \cite{EPM} the authors tested different preconditioning methods and concluded that the best result is obtained by applying the block triangular preconditioner
\begin{equation}
 \mathcal{P} = \begin{pmatrix}
                 A &  \\
                 B & -C
               \end{pmatrix}, \quad \text{ where } \quad C = BA^{-1}B^\top - \text{Schur complement matrix}.
\end{equation}
Note, that applying the inverse of $\mathcal{P}$ from the left we get a preconditioned matrix
\begin{equation*}
 \mathcal{K}_{\mathcal{P}} \approx \begin{pmatrix}
                                       I & A^{-1}B^\top\\
                                         & I
                                     \end{pmatrix}
\end{equation*}
which is upper triangular. Thus, one sweep of Gaussian elimination leads to the required solution. Unfortunately, obtaining the exact inverse of $\mathcal{P}$ is almost as costly as inverting the original matrix $\mathcal{K}$ itself. This necessitates the need for a a cheap approximate inverse of $A$ and $C$. 

\subsubsection{Approximate inverse of A} \label{sec:aiA}
Since the matrix $A$ corresponds to the elliptic Navier--Lam\'e operator, one can show that a multigrid solver provides an excellent convergence rate, providing the {\em Poisson ratio} $\nu>0$ is relatively small. For $\nu \rightarrow 0.5$ the rank deficient $\nabla^h \nabla^h\cdot$ operator becomes dominating and $A$ becomes nearly singular \cite{ZSTB2010}. In particular, the solution to the linear elastic system may be perturbed by any divergence-free displacement without affecting the residual. These perturbations may have highly oscillatory behavior and thus the produced errors cannot be smoothed efficiently by the coarser grid information. Since most of the soft human tissues exhibit near incompressible behavior, we reformulate the linear elasticity problem into an equivalent form that does not suffer from the near singularity of the original equation \cite{GGLO}.

For a given $u^h$ let us introduce a new variable $q^h\in H_c$ to be $q^h = -\lambda \nabla^h\cdot u^h$. We now rewrite the discrete Navier--Lam\'e operator (\ref{eq:A}) as
\begin{equation}\label{eq:Aaug}
 A^h_{aug} = \begin{pmatrix}
                         \mu \nabla^h\times\nabla^h\times -2\mu \nabla^h \nabla^h\cdot & \nabla^h\\
                         \nabla^h                                     \cdot & \lambda^{-1}
                       \end{pmatrix}.
\end{equation}
For later use we define a distribution matrix
\begin{equation}
  M^h = \begin{pmatrix}
                    I          & -\nabla^h\\
                    \mu \nabla^h\cdot  & -2\mu \nabla^h\cdot\nabla^h
                  \end{pmatrix}.
\end{equation}
Using the properties of our discrete operators we can easily observe that
\begin{equation}
  A^hM^h = \begin{pmatrix}
                                 \mu(\nabla^h\times\nabla^h \times - \nabla^h \nabla^h\cdot)  & \\
                                 (1+\frac{\mu}{\lambda})\nabla^h\cdot          & -(1+\frac{2\mu}{\lambda})\nabla^h\cdot\nabla^h
                               \end{pmatrix}
\end{equation}
which is a block triangular matrix with Laplace-like operators on the main diagonal. For such matrices a standard Gauss--Seidel relaxation method provides perfect smoothing properties \cite{GGLO}. Here, we used linear interpolation as a prolongation operator and restriction operators constructed with the following stencils
\begin{equation*}
 S_{u_1} = \frac18\begin{pmatrix}
                  1  &     & 1\\
                  2  & \triangledown & 2\\
                  1 &  & 1
                  \end{pmatrix}, \quad
 S_{u_2} = \frac18\begin{pmatrix}
                  1  &  2   & 1\\
                     & \triangleright & \\
                  1 &  2 & 1
                  \end{pmatrix}, \quad
 S_{p}   = \frac14 \begin{pmatrix}
                     1  &  &  1\\
                        &\bullet & \\
                     1 &  & 1
                   \end{pmatrix}.
\end{equation*}
We experienced that such a multigrid solver with a $V(1,1)$ cycle is mesh independent. Furthermore it does not depend on the elasticity parameters, thereby allowing for a nearly incompressible setting.

\subsubsection{Approximate inverse of S} \label{sec:aiS}
For the efficient solver for the Schur complement system with the matrix $C = BA^{-1}B^\top$ we follow the strategy based on approximate commutators \cite{BGL}. This approach consists of finding an approximate solution to the matrix equation
\begin{equation*}
  B^\top X = AB^\top. 
\end{equation*}
Under the assumption that $B$ has full row rank, the least square solution to the above system is
\begin{equation*}
  X \approx (BB^\top)^{-1}BAB^\top.
\end{equation*}
Then we can approximate the inverse of the Schur complement matrix by
\begin{equation}
  C^{-1} \approx (BB^\top)^{-1}BAB^\top(BB^\top)^{-1}.
\end{equation}
By definition the matrix $BB^\top$ is positive semidefinite and strictly positive definite whenever $B$ has full row rank. Unfortunately, $BB^\top$ is typically ill-conditioned and thus iterative methods devoted to symmetric positive definite systems, converge slowly. As suggested in \cite{EPM} we use the complete LU factorization. Note, that the size of this matrix is $m \approx \frac13 N$ and is thus much cheaper than the LU factorization of the original problem. Having the $LU$ form of $BB^\top$, the application of the inverse Schur complement becomes straight forward.

\subsection{Multilevel approach}\label{sec:mrs}
Due to the fact that our constraints $c^h_i$, $1 = 1,...,m$ are nonconvex functions, the problem (\ref{eq:IRDopt_cond}) may possess many different local solutions. Although we cannot guarantee that our algorithm converges to the global minimizer, we can decrease the risk of ending up with a stationary point being far away from it. This is obtained by the multilevel strategy seen in Figure~\ref{fig:multiresolution}. 
\begin{figure}[htb]\centering
\includegraphics[scale=0.5]{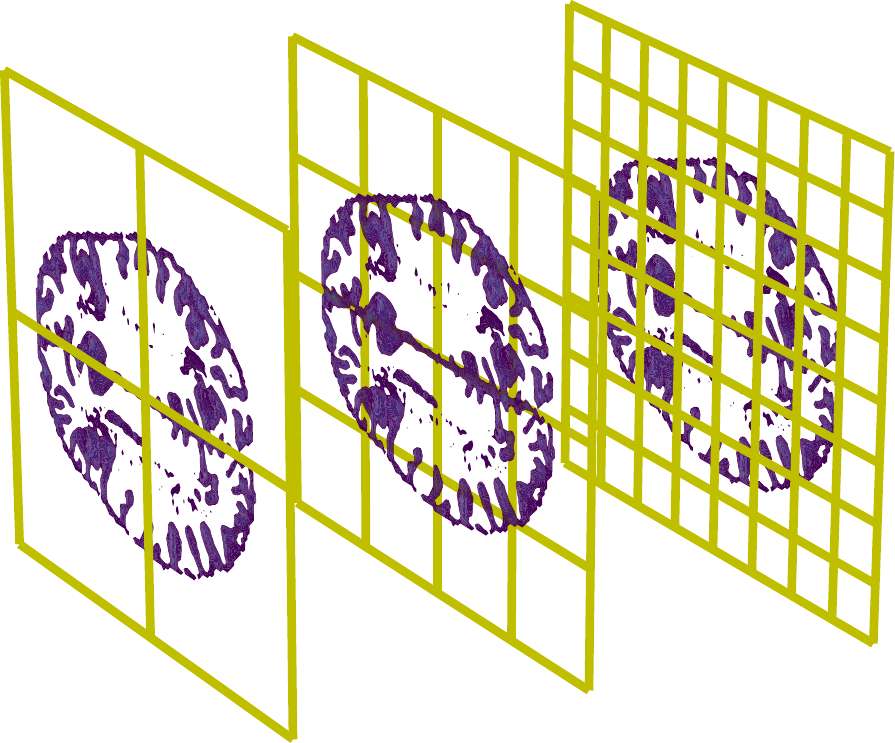}
\caption{Multiresolution coarsening, from left to right}
\label{fig:multiresolution}
\end{figure}
This approach consist of solving a sequence of subproblems with coarser images. These are obtained from the original images by applying a smoothing filter. Starting with the coarsest images we apply the image registration algorithm and then interpolate the obtained solution on the finer domain. One easily observes that the smoother the images are the more regular the problem is and thus providing a higher chance in finding the global solution. Moreover, this approach significantly accelerates and stabilizes the solution algorithm as it provides an excellent initial guess for the current resolution problem.

Here we apply a standard filter based on the stencil $S_p$ described in Section~\ref{sec:aiA}. Let $I_{low}$ denote the restriction operator to a lower resolution level. Then the overall image registration algorithm is expressed as follows.

\begin{alg}[]\text{ }\\
Given: original images $R,T$, integer $rl_{min}$\\
Initialize: $k = rl_{min}$, $u^h_0 = 0$, $p^h_0 = 0$
\begin{enumerate}
  \item produce lower resolution images $R_k = I_{low}^k R$, $T_k = I_{low}^k T$
  \item set up the image registration problem on the resolution level $-k$
  \item apply Algorithm \ref{alg:olir} to get $(u^h_k,p^h_k)$
  \item prolongate $(u^h_k,p^h_k) \rightarrow (u^h_{k-1},p^h_{k-1})$
  \item update $k = k-1$
  \item if $k \geq 0$ go to step 1
\end{enumerate}
\end{alg}
\section{Numerical examples}
Next, we present several examples to demonstrate the advantages of our approach. Among the performed tests we consider a tailored example (Ex1) and a real world example (Ex2). The first case is to show how our algorithm works with an 'ideal' input data, while the second example contains perturbed data requiring some preprocessing. 

In order to compare different results we provide two quantitative information, namely the value ${\sf Elas} := S^h(u^h)$ which approximates the elastic strain energy and ${\sf DMP} = \Vert c^h(u^h)\Vert_{\infty}$ which locally measures the distance of the determined deformed image to the reference one.

\subsection{Tailored example (Ex1)}
Here we demonstrate that our algorithm recovers the true deformation. This example is taken from \cite{FRO}. Let us define a real function
\begin{equation*}
  q(z) := \left(-\frac{1}{8\pi}z^2 + \frac{1}{256\pi^3} + \frac{1}{32\pi}\right)\cos(8\pi z) + \frac{1}{32\pi^2}z\sin(8\pi z).
\end{equation*}
Now let $\Omega = [-0.5,0.5]\times[-0.5,0.5]$ and set $\rho_T$ to be uniformly one, i.e., $\rho_T = 1$ on $\Omega$. Let us define a deformation $\phi_e\colon \Omega\rightarrow\Omega$ as
\begin{equation*}
  \phi_e(x) = \begin{pmatrix} x_1 + 4\,q'(x_1)q(x_2) \\
  							 x_2 + 4\,q(x_1)q'(x_2) \end{pmatrix}.
\end{equation*}
We set $\rho_R$ to be
\begin{equation*}
  \rho_R(x) = \det(\nabla\phi_e(x))(\rho_T\circ\phi_e)(x) = \det(\nabla\phi_e(x)).
\end{equation*}
The above setting guarantees that the mass preserving constraint is exactly satisfied for the displacement field
\begin{equation*}
  u_e(x) = \begin{pmatrix}
           4\,q'(x_1)q(x_2)\\
           4\,q(x_1)q'(x_2)
         \end{pmatrix}.
\end{equation*}
On the other hand we know that there may exist other mass preserving solutions with possibly lower elastic strain energy. Therefore we cannot expect that our algorithm produces approximate solutions that converge to $u_e$. This solution determines an upper bound for the value ${\sf Elas}$, namely ${\sf Elas}_{\max} = S^h(u^h_e)$, where $u^h_e$ stands for the discretization of $u_e$, and a lower bound for the dissimilarity measure ${\sf DMP}_{\min} = 0$. We expect to find an approximate displacement with ${\sf Elas} \leq {\sf Elas}_{\max}$ and ${\sf DMP} \geq 0$.

In Figure~\ref{fig:sin_uniform_input}, we present the input data together with the known deformation field and the corresponding difference image. In this example we discretize $\Omega$ over a grid of $64\times 64$ cells.
\begin{figure}[h]\centering
\includegraphics[scale=0.6]{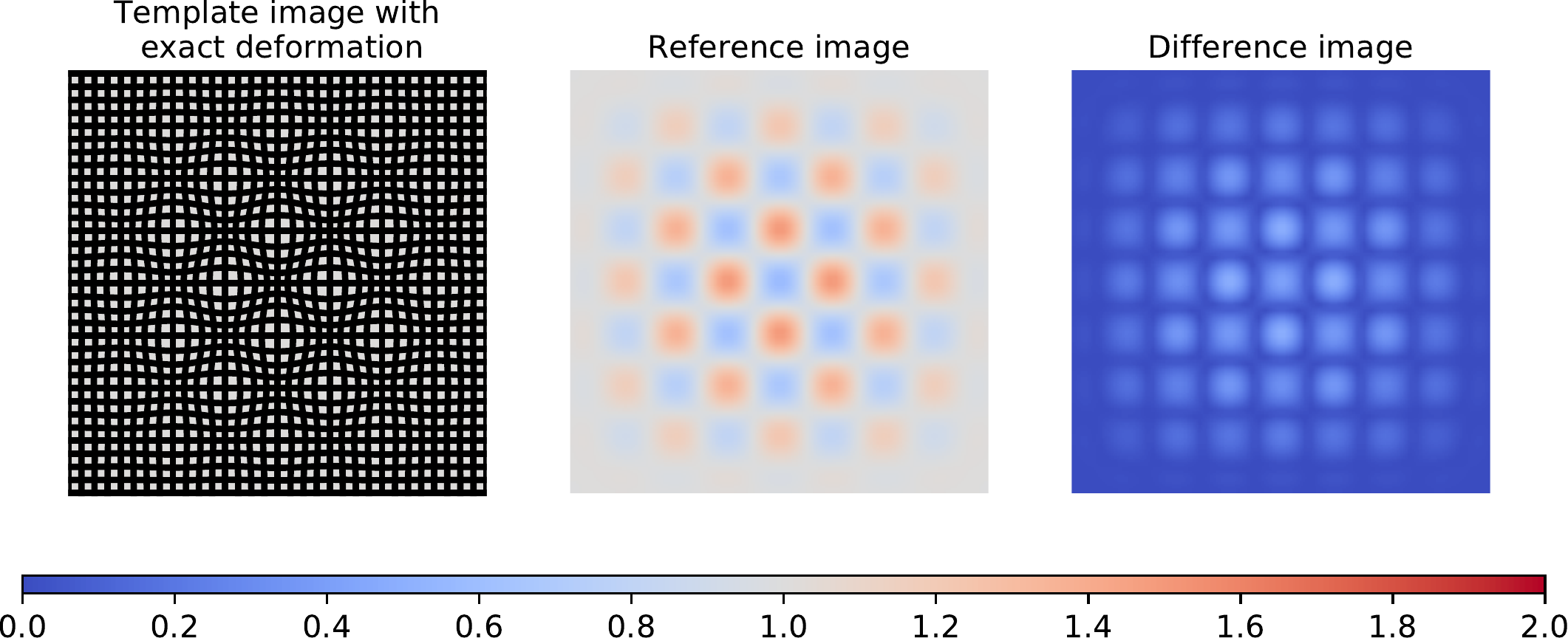}
\caption{Example 1 - input data}
\label{fig:sin_uniform_input}
\end{figure}

Further, we calculate 
\begin{equation*}
  {\sf DE} = \Vert u^h-u^h_e\Vert_{\infty},
\end{equation*}
where $u^h$ stands for the computed approximation, to measure how the minimization of $S^h$ influences the solution.

Figure~\ref{fig:sin_uniform_info} illustrates how the specific properties of the approximated mass preserving solution $u^h_{\text{mp}}$ changes during the optimization process. For comparison purposes, we also depict the results obtained by solving the regularized problem (\ref{op:irp}) with the dissimilarity measure defined in (\ref{op:mp}) for different values of the regularization parameter $\alpha$. We call the approximate solution obtained with this method $u^h_{\alpha}$. In this example we skip the multiresolution strategy and stop the iterative solver for the KKT system (\ref{eq:KKT}) when the residual norm achieves the tolerance level $r_{\text{tol}} = 1\cdot 10^{-5}$. The Lam{\' e} constants are set to be $\mu=1$, $\lambda=0$.
\begin{figure}[htb]\centering 
\hspace*{-3em}\includegraphics[scale=0.40]{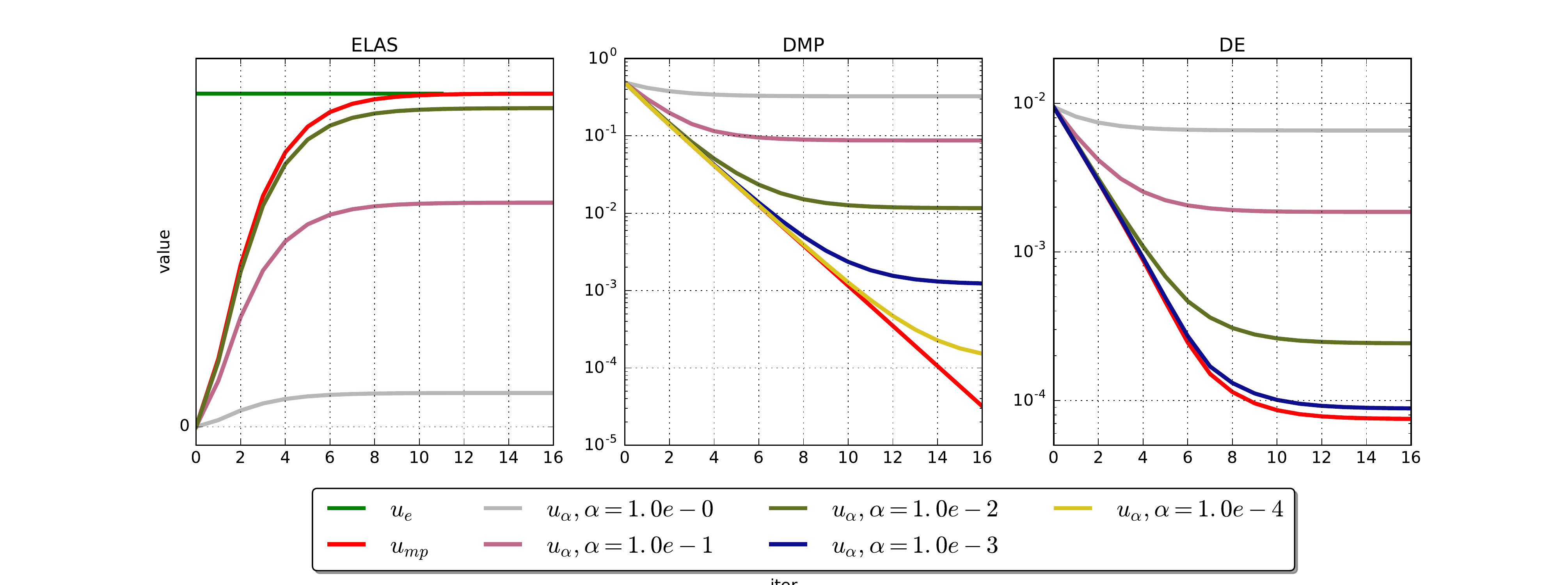}
\caption{Example 1 - {\sf Elas} - {\sf DMP} - {\sf DE}}
\label{fig:sin_uniform_info}
\end{figure}

For $\alpha$ tending to zero, both the approximated displacements $u^h_{\text{mp}}$ and $u^h_{\alpha}$ exhibit similar properties. Indeed, we see that for $\alpha\ll 1$, the curves representing the quantities ${\sf Elas}$, ${\sf DMP}$ and ${\sf DE}$ (Figure \ref{fig:sin_uniform_info} left, middle and right respectively) overlap, which is to be expected. 

The main difficulty in the regularized problem is that we do not know the regularization parameter $\alpha$ a priori. One can naturally use the L-curve or continuation method \cite{CMRS,PCH,CRV} in order to find an optimal parameter, but these approaches are rather computationally expensive. In practice, one simply chooses an artificial value. Figure~\ref{fig:sin_uniform_iter} demonstrate how strongly this choice affects the optimization process and the solution. If $\alpha$ is chosen too large, we obtain very smooth approximations after few iterations, whereupon the process stagnates and the solution stays far away from $u_e$. We obtain a reasonably good approximation for $\alpha<1\cdot 10^{-3}$. In this situation, however, we must solve extremely ill-conditioned systems. Figure~\ref{fig:sin_uniform_iter}(left) presents the required number of iterations for the GMRES solver with algebraic multigrid preconditioner \cite{BeOlSc2008}. In Figure~\ref{fig:sin_uniform_iter}(right) we present how the regularization affects the obtained deformed image model.

\begin{figure}[htb]\centering
\hspace*{-3em}\includegraphics[scale=0.39]{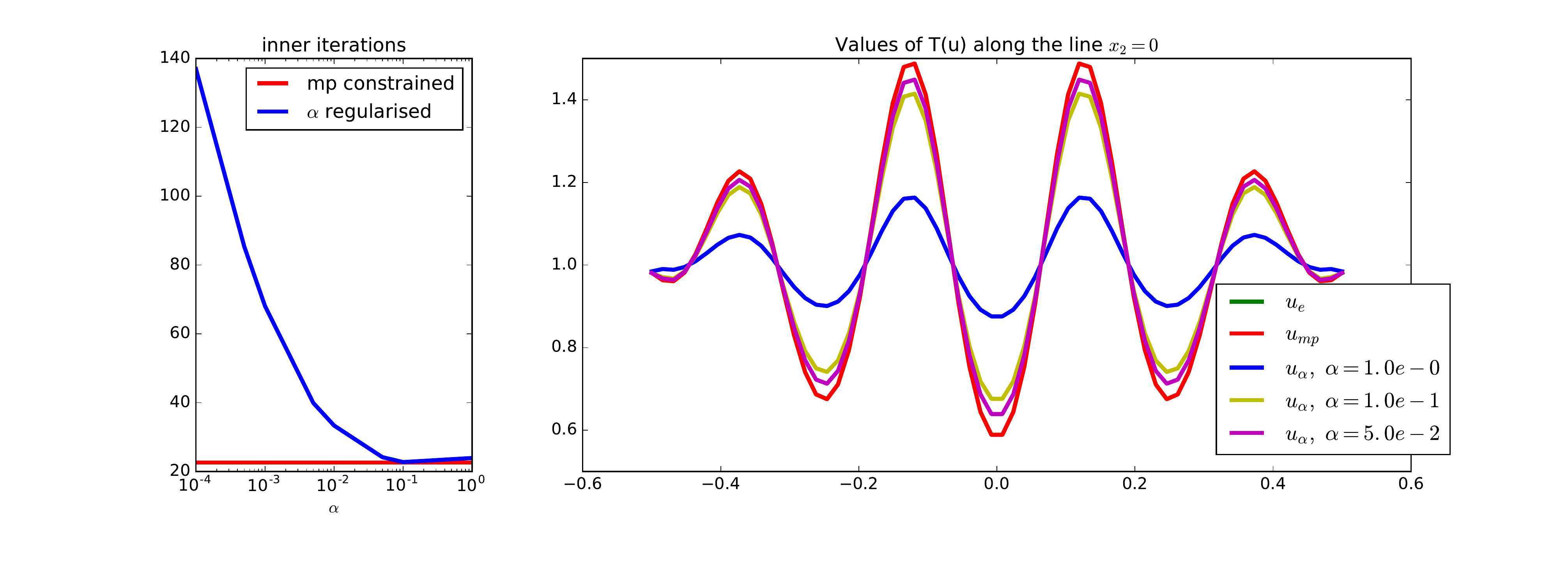}
\caption{Example 1 - left: average number of inner iterations for one KKT solve, right: values of deformed image along the axis $y=0$}
\label{fig:sin_uniform_iter}
\end{figure}

\subsection{Real world example (Ex2)}
In this example we process real world data obtained from \cite{BrainWeb}. They illustrate the cortical tissue of human brain at different positions in $z$ dimension. The image resolution is $128\times 128$. We first apply a Gaussian filter on these images to get rid off the present noise. Then, in order to force the mass preserving condition, we scale these data to get intensity values in range $[\delta,1]$ and modify in such a way to obtain the same total mass (sum of all pixel intensities). The value $\delta=0.03$ prevents instabilities during the optimization process. We present the input data and the corresponding difference image on Figure~\ref{fig:brain_input}. We see that the biggest difference appears on the boundary of the cortex tissue. This situation is very realistic and typically comes from spatial misalignment. 
\begin{figure}[h]\centering
\includegraphics[scale=0.6]{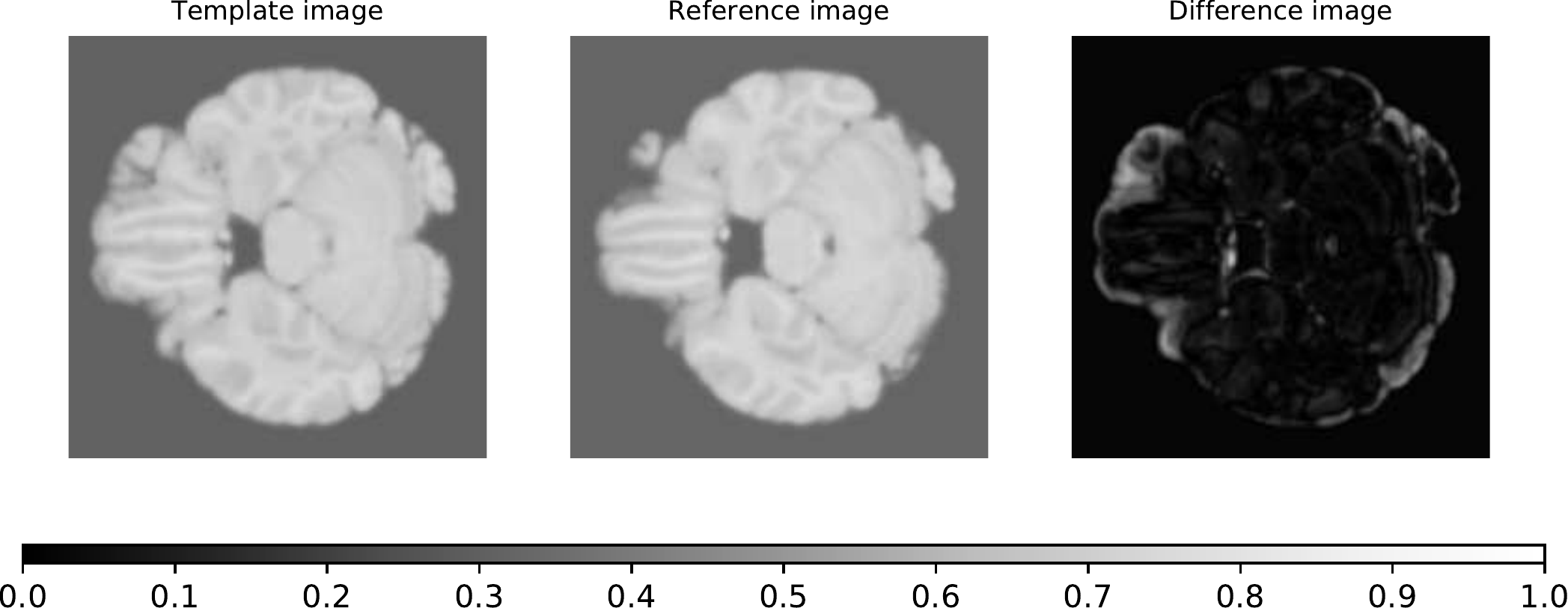}
\caption{Example 2 - input data}
\label{fig:brain_input}
\end{figure}

We adapt the algorithm to work on free resolution levels, i.e., at each resolution level $rl$ we stop the iterations as soon as 
\begin{equation*}
 {\sf DMP}_{rl} < C_{rl} \cdot 10^{-3},
\end{equation*}
where $C_{rl}$ corresponds to the cell size at level $rl$, which is relative to the cell size at the finest resolution, i.e., $C_0=1$, $C_{1} = 4$, $C_{2}=16$. As before we set $\mu=1$, $\lambda = 0$.
\begin{figure}[htb]\centering 
\hspace*{-2em}\includegraphics[scale=0.39]{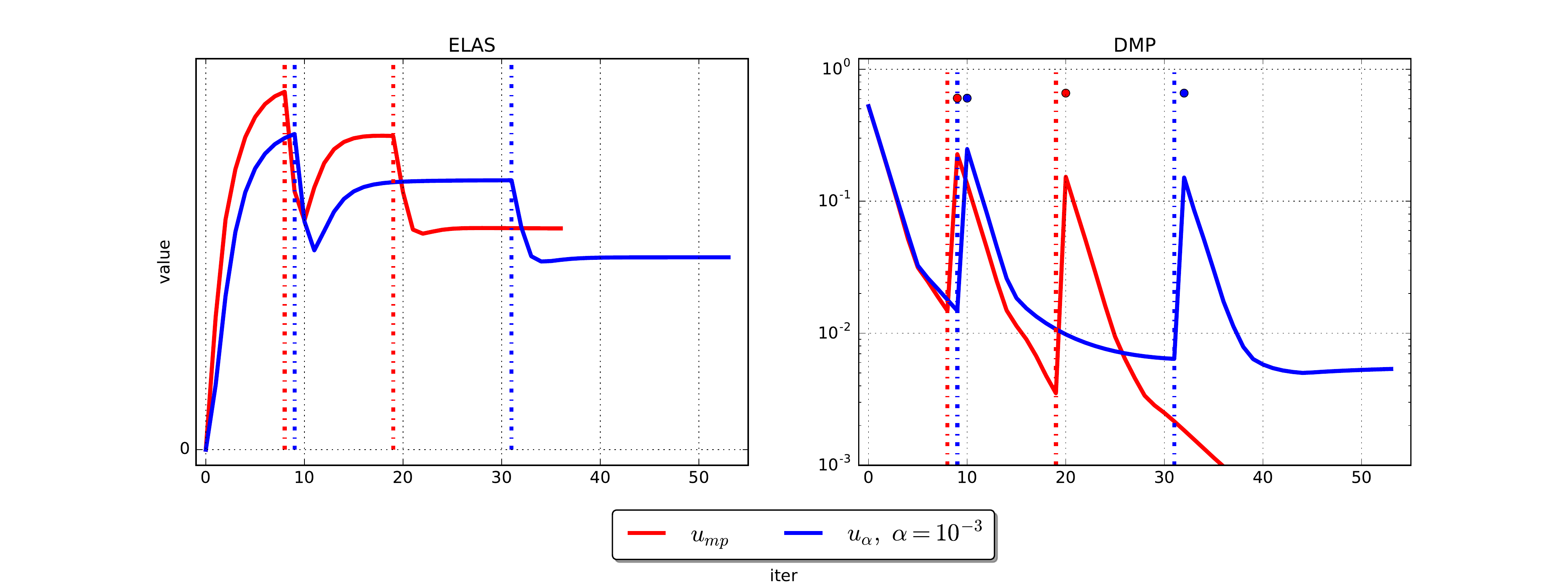}
\caption{Example 2 - {\sf Elas} - {\sf DMP} - {\sf DMP$_{\text{global}}$}, vertical dotted lines separate different resolution levels, $\bullet$ denote the reference value of ${\sf DMP}$ for zero vector}
\label{fig:brain_info}
\end{figure}

We present the results on Figures \ref{fig:brain_info} and \ref{fig:brain_deform}. Information from all resolution levels are posed at one figure and separated by vertical dotted lines. Thus we can see the overall performance of the registration algorithm. We can see that ${\sf DMP}$ is reduced significantly by the solution from the coarser grid and thus justifies the use of the multiresolution strategy. We performed $36$ iterations to obtain a solution with the desired {\sf DMP} tolerance. The regularized problem with parameter $\alpha= 10^{-3}$, however, does not lead to a solution with ${\sf DMP}< 10^{-3}$ and requires in fact a smaller $\alpha$. Figure \ref{fig:brain_deform} shows the deformation grid which allows for an almost perfect match of the images, while the mass is preserved with high accuracy.

\begin{Rem}
We point out that the ratio between the Lam{\' e} constants $\nicefrac{\mu}{\lambda}$ has a big influence on the determined solution. Whenever $\nicefrac{\mu}{\lambda}$ is large the algorithm allows for solutions that are not divergence-free. On the other hand, when $\nicefrac{\mu}{\lambda}$ tends to zero, the divergence part of the elastic strain energy dominates. In this situation displacement fields with a dominating divergence-free part are more likely to be chosen.
\end{Rem}

\begin{figure}[htb]\centering
\includegraphics[scale=0.6]{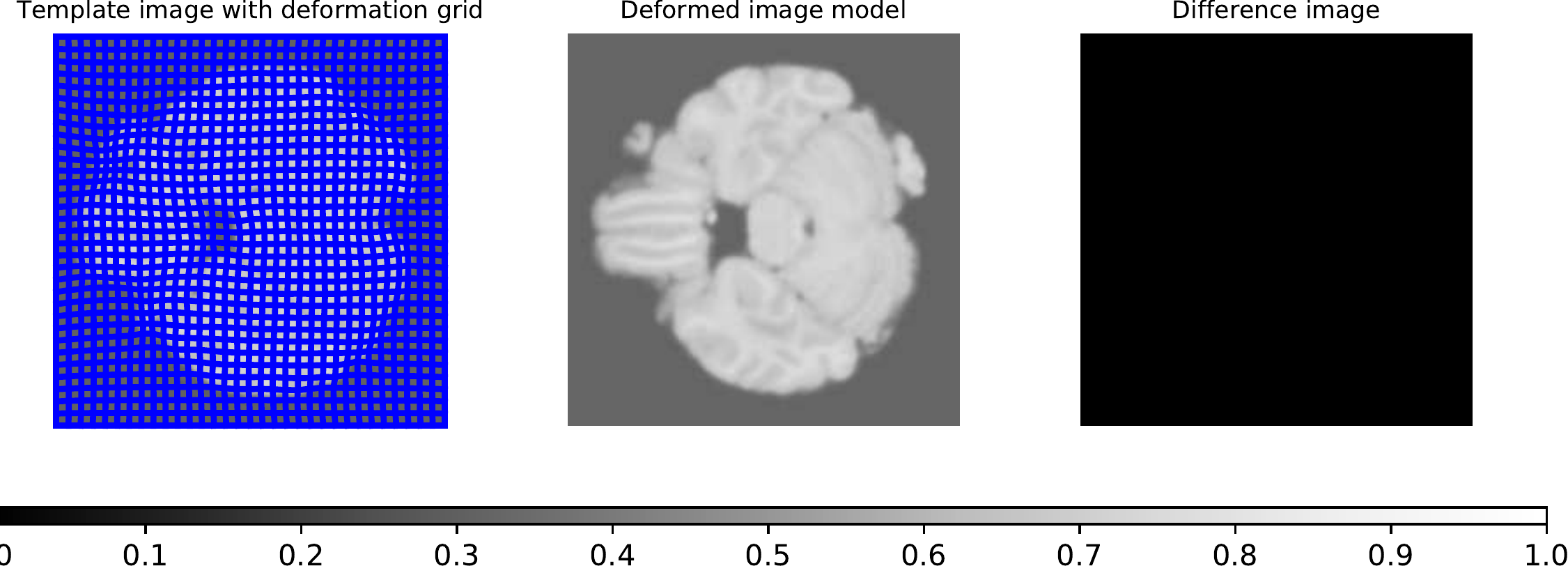}
\caption{Example 2 - deformation grid - deformed image model - difference image}
\label{fig:brain_deform}
\end{figure}

\section{Conclusion}
We considered a 2D image registration problem as an optimization problem with a fully nonlinear partial differential equation as a hard constraint. This constraint expresses the preservation of mass while the cost functional controls the linear elastic strain energy of a deformation. We proved that the problem stated in this form has a weak solution in a Sobolev space under fairly mild assumptions. Furthermore, we developed a numerical scheme to solve the problem and demonstrated its robustness on synthetic and real world data. The method involves no regularization parameter and allows to match the images accurately while preserving the mass with high precision.

\begin{appendix}
\section{Discretization} \label{app:discretization}
\subsection{Staggered grid}\label{subsec:staggered_grid}
We assume that the input data $R$ and $T$ are $n_1\times n_2$ matrices describing the pixels intensities. We identify each pixel with a square, whose side length is $h$ and assume that the given data are associated with the cell centers. The unknown displacements $u$ and the corresponding deformations $\phi$ are discretized as follows. Let $e_i$ be the $i$'th Cartesian axis vector, $i=1,2$. We store the variables $u_i^h$ ($\phi_i^h$ respectively) on the center of grid edges whose orientation is consistent with $e_i$, where the superscript $\cdot^h$ indicates the discrete approximation. For simplicity let us introduce the following meshes
\begin{equation}
\begin{aligned}
 \Omega_n &= \Big\{(ih,jh):\ i\in\{0,...,n_1\},\ j\in\{0,...,n_2\}\Big\}\\
 \Omega_c &= \Big\{((i+\nicefrac12)h,(j+\nicefrac12)h):\ i\in\{0,...,n_1-1\},\ j\in\{0,...,n_2-1\}\Big\}\\
 \Omega_{u_1} &= \Big\{((i+\nicefrac12)h,jh):\ i\in\{0,...,n_1-1\},\ j\in\{0,...,n_2\}\Big\}\\
 \Omega_{u_2} &= \Big\{(ih,(j+\nicefrac12)h):\ i\in\{0,...,n_1\},\ j\in\{0,...,n_2-1\}\Big\}.
\end{aligned}
\end{equation}
Moreover, let $H_n$, $H_c$, $H_{u_1}$, $H_{u_2}$ be appropriate spaces of discrete functions defined on $\Omega_n$, $\Omega_c$, $\Omega_{u_1}$ and $\Omega_{u_2}$ respectively. Additionally we define $H_u$ to be $H_u := H_{u_1}\times H_{u_2}$. With this notation we can write $R,T\in H_c$, $\phi_i^h,u_i^h\in H_{u_i}$, $i=1,2$ and $\phi^h,u^h\in H_u$. It is worth to note that $u^h$ is the optimization variable and the deformation $\phi^h$ appears implicitly as $\phi^h(x) = x + u^h(x)$, $x\in\Omega_c$.

Next, we define discrete derivatives of the quantities defined on the staggered grid $\Omega_{u_1}\times\Omega_{u_2}$ as
\begin{equation}
\begin{aligned}
  D_1^i u^h(x) &= \frac1h \left(u_i^h(x_1+\nicefrac{h}{2},x_2) - u_i^h(x_1-\nicefrac{h}{2},x_2)\right),\quad x\in \Omega_n \text{ if }i=1 \text{ or }x\in \Omega_c \text{ if } i = 2,\\
  D_2^i u^h(x) &= \frac1h \left(u_i^h(x_1,x_2+\nicefrac{h}{2}) - u_i^h(x_1,x_2-\nicefrac{h}{2})\right),\quad x\in \Omega_c \text{ if }i=1 \text{ or }x\in \Omega_n \text{ if } i = 2,
\end{aligned}
\end{equation}
where near the boundary we utilize homogeneous Dirichlet boundary conditions. Observe that with the above second-order accurate definition, the first-order derivatives of a certain quantity in $H_u$ are not collocated with itself. Indeed, one easily notices that $D_i^i\colon H_u\rightarrow H_n$, but $D_i^j\colon H_u\rightarrow H_c$ for $i\neq j$. Moreover, locations of the normal and tangential derivatives differ. Such collocation of unknowns and their derivatives calls for introducing projection operators when the constraints are discretized. Therefore we define operators $P_{u_i\rightarrow c}\colon H_{u_i}\rightarrow H_c$, $P_{u_i\rightarrow n}\colon H_{u_i}\rightarrow H_n$, $i=1,2$
as bilinear interpolation, e.g.
\begin{equation}
  P_{u_1\rightarrow c}u^h(x) = \frac12\left( u^h_1(x_1-\nicefrac{h}{2},x_2)+u^h_1(x_1+\nicefrac{h}{2},x_2) \right), \qquad x\in\Omega_c.
\end{equation}

\subsection{Image model} \label{sec:im}
An important issue is the observation that the input data may have extremely irregular structure. This would lead to instabilities when a numerical method is implemented. In order to minimize the risk of failure there is a need for additional regularization. As one of the building blocks we require a continuous and differentiable representation of the image data $T$. The simplest and computationally cheapest choice would be the linear interpolation method. Clearly the differentiability assumption is then not satisfied. Therefore, following \cite{HM4} we use the cubic B-spline interpolation scheme \cite{MS}. Due to the Curry--Schoenberg theorem \cite{SCHOENBERG19731} we can find a unique B-spline representation of any order for our discrete data $T$. The order 3 is chosen due to the minimum curvature property. With this choice we have the following image model
\begin{equation} \label{eq:BsImage}
  \rho_{T}(x) = \mathcal{I}_{bs}[T](x) := \sum_{i=0}^3\sum_{j=0}^3 B_i(\bar{x}_1)B_j(\bar{x}_2)\gamma_{\bar{i}+i,\bar{j}+j},
\end{equation}
where $(\bar{i},\bar{j})$ is the index related to the control cell containing $x = (x_1,x_2)$ by the formula $\bar{i} = \left\lfloor \nicefrac{x_1}{h}\right\rfloor-1$, $\bar{j} = \left\lfloor \nicefrac{x_2}{h}\right\rfloor-1$ and $\bar{x}$ is the relative position of $x$ inside that cell, i.e., 
\[
 \bar{x}_1 = \nicefrac{x_i}{h} - (\bar{i}+1),\qquad \bar{x}_2 = \nicefrac{x_2}{h} - (\bar{j}+1).
\]
The functions $B_i,\ i=0,...,3$ are the Bernstein basis polynomials of degree $3$.
The coefficients $\gamma_{i_1,i_2},\ i_k = -1,...,n_k+1$ may be precomputed. 
Observe that the continuous function $\rho_T$ defined in this way possesses smooth derivatives up to third order which can be calculated directly in a simple and computationally efficient way. 
The B-splines method gives a very good compromise between the accuracy of the interpolation and the reasonable computation time. 

Let $u^h\in H_u$ be a given displacement function and $\phi^h=id +u^h$ be its corresponding deformation. With this model at hand we can easily calculate the values of $\rho_T\circ \phi^h$ and its derivatives at the shifted cell centers
\begin{equation}\label{eq:yy}
  (y_1,y_2) = (x_1+P_{u_1\rightarrow c}u_1^h,x_2+P_{u_2\rightarrow c}u_2^h),\quad (x_1,x_2)\in\Omega_c. 
\end{equation}
Let $\rho_T\circ \phi^h\in H_c$ denote the warped template image uniquely determined by the displacement $u^h$ and its derivative $(\nabla^h\rho_T)\circ \phi^h\in H_c\times H_c$, i.e.,
\begin{align*}
  \rho_T\circ \phi^h(x) &= \rho_T(y),\qquad x\in\Omega_c\\
  (\nabla^h_1\rho_T)\circ \phi^h(x) &= \partial_{x_1}\rho_T(y), \qquad x\in\Omega_c \\
  (\nabla^h_2\rho_T)\circ \phi^h(x) &= \partial_{x_2}\rho_T(y),\qquad x\in\Omega_c,
\end{align*}
where $y = (y_1,y_2)$ is given as in (\ref{eq:yy}). Then the linear operator $\delta(\rho_T\circ \phi^h)\in \mathcal{L}(H_u,H_c)$ being the Fr{\' e}chet derivative of $\rho_T\circ \phi^h$ is
\begin{equation}
\begin{aligned}
  \delta(\rho_T\circ \phi^h) [\cdot] = (\nabla^h_1\rho_T\circ \phi^h)\,P_{u_1\rightarrow c}[\cdot] + (\nabla^h_2\rho_T\circ \phi^h)\,P_{u_2\rightarrow c}[\cdot].
\end{aligned}
\end{equation}

\subsection{Constraints} \label{sec:dc}
Observe that the constraint $c$ contains the Jacobian dterminant $\det(\nabla\phi)$, which couples all elements of the Jacobian matrix of $\phi$. This is in fact the most challenging element to discretize. As mentioned in Section \ref{subsec:staggered_grid}, derivatives are located at different locations. A natural solution would be to apply the projection operator. However,  this approach may lead to instabilities as observed in \cite{LR}. Therefore, we follow the method proposed in \cite{HM}. Instead of discretizing $\det(\cdot)$ in $\Omega_c$ we calculate the approximated change of volume for each cell. This method is accurate up to second-order and allows to easily detect instabilities \cite{LR}. More precisely, let $x\in\Omega_c$ and $V(x)$ denote the cell centered at point $x=(x_1,x_2)$. Then we have
\begin{equation}  \label{eq:det}
\begin{aligned}
  \det(\nabla\phi)(x) = \frac{1}{h^2}vol(\phi(V(x)) + \mathcal{O}(h^2) = \frac{1}{h^2}\int_{\phi(V(x))} dx + \mathcal{O}(h^2)
\end{aligned}
\end{equation}
and the latter is approximated by a procedure precisely described in \cite{HM}.

Now we define the mapping $\det^h\colon H_u\to H_c$ by
\begin{equation}\label{eq:det_h}
  V^h(u^h)(x) := \frac{1}{h^2}vol(\phi^h(V(x))), 
  \quad x\in\Omega_c,
\end{equation}
with $\phi^h = id + u^h$. From the above discussion, this yields a consistent second-order approximation of the Jacobian determinant \cite{LR}. Correspondingly, let $\delta V^h(u^h)\in \mathcal{L}(H_u,H_c)$ denote the Fr{\' e}chet derivative of $V^h$ at $u^h$. Similarly, we consider the nonlinear constraint $c^h\colon H_u\rightarrow H_c$ where
\begin{equation} \label{eq:c}
  c^h(u^h)(x) = V^h(u^h)(x)(\rho_T\circ \phi^h)(x) - \rho_R(x), \qquad x\in\Omega_c.
\end{equation}
Furthermore, its Frech\'et derivative at $u^h$ denoted by $B^h(u^h) \in \mathcal{L}(H_u,H_c)$ is given by
\begin{equation}\label{eq:dc}
  \delta c^h(u^h)[\cdot](x) = \delta V^h(u^h)[\cdot](x)(\rho_T\circ \phi^h)(x) + \delta(\rho_T\circ \phi^h)[\cdot](x)\,V^h(u^h)(x),\qquad x\in\Omega_c.
\end{equation}

\subsection{Objective functional} \label{sec:dr}
Note,  that the Fr{\' e}chet derivative of $\mathcal{S}$ in $H^1(\Omega;\R^2)$ is
\begin{equation*}
\begin{aligned}
  d\mathcal{S}(u)[v] &=  \langle\mu\Delta u + (\mu+\lambda)\nabla(\nabla\cdot u), v\rangle_{H^{-1},H^1} =: \langle\mathcal{A}(u),v\rangle_{H^{-1},H^1}\quad \text{ for all } v\in H^1(\Omega;\R^2),
\end{aligned}
\end{equation*}
where $\Delta$ is the Laplace operator and $\nabla\cdot$ denotes divergence operator. A straightforward approach would be to discretize the operator $\mathcal{A}$ by $A^h$ and define $S^h(u^h)$ to be $S^h(u^h) := \frac12 \langle u^{h},A^hu^h\rangle$ as in \cite{HW}. This approach, however, has a significant drawback which causes the multigrid method to be inefficient for $\lambda \gg \mu$. Therefore, we choose a mimetic discretization (cf.~\cite{GGLO}) in order to preserve the properties of the continuous operator. Let $\nabla\times$ denote the self-adjoint curl operator. Then for any continuous vector field $u$ and scalar field $f$ we have the identities:
\begin{enumerate}
  \item[(i)] $\Delta u = \nabla(\nabla\cdot u) - \nabla\times(\nabla\times u)$
  \item[(ii)] $\nabla\times(\nabla f) = 0$
  \item[(iii)] $\nabla\cdot(\nabla\times u) = 0$
\end{enumerate}
Using the property (i) we can write
\begin{equation*}
\begin{aligned}
  d\mathcal{S}(u)[v] &=  \langle\mu\nabla\times(\nabla\times u) - (\lambda+2\mu)\nabla(\nabla\cdot u), v\rangle_{H^{-1},H^1}  =: \langle\mathcal{G}^*\mathcal{G}u, v\rangle_{H^{-1},H^1},
\end{aligned}
\end{equation*}
where $\cdot^*$ denotes the adjoint operator and
\begin{equation*}
  \mathcal{G} = \begin{pmatrix} \sqrt{\mu} & 0\\ 0 & \sqrt{2\mu + \lambda} \end{pmatrix}\begin{pmatrix} \nabla\times  \\ \nabla\cdot \end{pmatrix}.
\end{equation*}
Following \cite{GGLO} we assemble operators $\nabla^h\times$ and $\nabla^h\cdot$ satisfying properties (ii) and (iii) on the discrete spaces, and set
\begin{equation} \label{eq:A}
  G^h := \begin{pmatrix} \sqrt{\mu} & 0\\ 0 & \sqrt{2\mu + \lambda} \end{pmatrix}\begin{pmatrix} \nabla^h\times \\ \nabla^h\cdot \end{pmatrix}, \qquad A^h := h^2G^{h,\top}G^h.
\end{equation}
Finally, our mimetic discretization of the objective functional reads
\begin{equation} \label{eq:S}
  S^h(u^h) = \frac12 \langle u^h, A^hu^h\rangle,
\end{equation}
whose Fr{\'e}chet derivative is given by
\begin{equation} \label{eq:dS}
  \delta S^h(u^h)[v^h] = \langle u^h,A^h v^h\rangle\qquad \forall v^h\in H_u.
\end{equation}
\end{appendix}

\bibliographystyle{plain} 
\bibliography{references} 

\end{document}